\newcommand{\Q}{\mathbb{Q}}
\newcommand{\R}{\mathbb{R}}
\newcommand{\M}{\mathfrak{M}}
\newenvironment{rmks}{\noindent{\bf Remark:}}{}
\newtheorem{thm}{Theorem}[section]
\newtheorem{lem}[thm]{Lemma}
\newtheorem{cor}[thm]{Corollary}
\newtheorem{dfn}[thm]{Definition}
\author{Mart\'\i n Avenda\~no and Ashraf Ibrahim}
\title{Ultrametric Root Counting}
\date{\today}
\begin{document}
 
\maketitle

\begin{abstract}
Let $K$ be a complete non-archimedean field with a discrete valuation,  
$f\in K[X]$ a polynomial with non-vanishing discriminant, 
$A$ the valuation ring of $K$, and $\M$ the maximal ideal of 
$A$. The first main 
result of this paper is a reformulation of Hensel's lemma that connects the 
number of roots of $f$ with the number of roots of its reduction modulo a 
power of $\M$.  We then define a condition --- {\em regularity} --- 
that yields a simple method to compute the exact number of roots of $f$ in $K$.
In particular, we show that regularity implies that the number of roots of $f$ 
equals the sum of the numbers of roots of certain binomials derived from the 
Newton polygon.
\end{abstract}

\section{Introduction}\label{sec-intro} 
Let $f$ be a univariate polynomial with real coefficients. Sturm's 
Theorem \cite{sturm} allows us to determine
the exact number of real roots of $f$ in a given interval $[a,b]$. This is 
done by computing the difference between the number of sign changes of two 
sequences of real numbers called {\em Sturm sequences} \cite{sturm,rs}. We are 
interested in the analogue of Sturm's Theorem over $K$, where $K$ is a 
field, complete with respect to a non-archimdean discrete valuation. More 
precisely, we give an algorithmic method to compute the exact number of roots 
in $K$ (total or with a given valuation) for a large class of polynomials in 
$K[X]$ called {\em regular} polynomials (see definition~\ref{def-reg}).

A classical construction associated to any polynomial $f\in K[X]$ is the 
{\em Newton polygon} (see section \ref{sec-newton} below), which also 
associates monomials of $f$ to points in $\Q^2$. 
For any lower edge $S$ of the Newton polygon of a regular polynomial 
$f\in K[X]$, containing only $2$ points associated to monomials of $f$, the 
binomial containing the corresponding two terms of $f$ is called a 
{\em lower binomial} of $f$. We prove in Theorem~\ref{lower-bin3}
that the number of roots in $K^\ast=K\setminus\{0\}$ of a regular $f$ 
is the sum of the numbers of roots in $K^\ast$ of all its lower binomials.  
A simple explicit formula for the number of roots of each lower binomial 
appears in
Theorem~\ref{lower-bin2}. 

On the other hand, Descartes' rule of signs implies that any univariate 
polynomial $f\in \R[X]$ with exactly $t+1$ monomial terms has at most $2t$ 
non-zero real roots, counted with multiplicity. Note that Descartes' bound 
over the reals doesn't depend on the degree of the polynomial, and is linear 
in the number of monomial terms. In~\cite{Len}, H.\ W.\ Lenstra gave an 
analogue of Descartes' bound over the $p$-adic numbers: if
$f\in K[X]$ has exactly $t+1$ monomial terms and $K$ is a finite extension of 
the $p$-adic rationals $\mathbb{Q}_p$ then the number of roots of $f$ in $K$ 
counted with multiplicity is $O(t^2(q-1)\log t)$ where $q$ is the cardinality 
of the residue field of $K$. As a consequence of our root count 
from Theorem \ref{lower-bin3} we can improve Lenstra's bound to $t(q-1)$ for 
regular polynomials. We 
also prove that our bound for regular polynomials is sharp. For fields of 
non-zero characteristic, our improvement is even greater: B.\ Poonen showed 
in~\cite[Thm. 1]{Poo} that when $p={\rm char}(K)$, the number of roots of a 
sparse polynomial with $t+1$ terms is at most $q^t$, and that there are 
explicit polynomials attaining this bound. Our bound
is linear in $t$ (for regular polynomials) in Poonen's setting as well. 
All this work is done is 
sections~\ref{sec-newton} and~\ref{sec-reg}.

In Theorem \ref{teor1} of section~\ref{sec-mod} we obtain a reformulation of 
the classical construction of Hensel lifting. Let $f\in A[X]$ be a monic 
polynomial with coefficients in the valuation ring $A$ of $K$, and $\bar{f}$ 
the reduction of $f$ modulo $\M^N$ for a sufficiently large integer $N$. We 
give a bijection between the set of roots of $f$ in $K$ and the set of classes 
of roots of $\bar{f}$ in the ring $A/\M^N$ under a particular equivalence 
relation.
As a consequence, for any polynomial in $K[X]$ with non-vanishing discriminant, the number of roots in $K$ depends only on the first
few ``digits'' of the coefficients (see Corollary~\ref{cor-approx}).

\section{Newton Polygon and Regularity}\label{sec-newton}
Let $K$ be a field that is complete with respect to a non-archimedean discrete 
valuation $v$. We denote by
$A=\{x\in K\,:\,v(x)\geq 0\}$ the valuation ring of $K$, $\M=\{x\in K\,:\,v(x)>0\}$ the maximal
ideal of $A$, $\pi\in\M$ a generator of the principal ideal~$\M$ of~$A$, 
and $\kappa=A/\M$ the residue field of~$K$ with respect to~$v$. We assume that 
$\kappa$ is finite with $q$ elements and characteristic $p$ and that 
$v(\pi)=1$. We also denote by $v$ the unique extension of the valuation of 
$K$ to its algebraic closure $\overline{K}$.

\medskip

Let $f(X)=a_nX^n+a_{n-1}X^{n-1}+\dots+a_1X+a_0 \in K[X]$. The 
\emph{Newton polygon} of $f$ is the 
convex hull of the set of points $\{(i,v(a_i))\;:\;i\in\{0,1,\ldots,n\}\}$. 
An edge of a polygon in $\R^2$ is said to be a {\em lower edge} if it 
has an inner normal vector with positive second coordinate. For instance, 
the hexagon that is the convex hull of $\{(-3,1),(-1,0),(1,0),(3,1),(-1,2),
(1,2)\}$ has exactly $3$ lower edges. 

\begin{thm}\label{newton-poly}
Let $f(X)=a_nX^n+a_{n-1}X^{n-1}+\dots+a_1X+a_0 \in K[X]$ be such that 
$n\geq 1$ and $a_0a_n\neq 0$. Let $S$ be a lower edge of the Newton polygon of 
$f$ with vertices $(s,v(a_s))$ and $(s',v(a_{s'}))$ 
with $s>s'$. Then $f$ has exactly $s-s'$ roots in $\overline{K}$, counted with 
multiplicities, with valuation $m$ where $-m$ is the slope of $S$. 
Moreover, $f$ can be factored as
\begin{equation}
f(X)=a_n\prod\limits_{\substack{m=v(\zeta) \\ 
f(\zeta)=0 \ , \  
\zeta \in \overline{K}}} f_m(X)
\label{equ1}
\end{equation}
where, for each $m$, $f_m$ is a non-constant monic polynomial in $K[X]$ with 
all roots of valuation $m$.
\end{thm}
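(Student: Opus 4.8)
The plan is to pass to the splitting field and read the Newton polygon off from the valuations of the roots, using only the ultrametric inequality together with the fact that $v$ extends uniquely from $K$ to $\overline{K}$.

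First I would factor $f$ over $\overline{K}$ as $f(X)=a_n\prod_{i=1}^{n}(X-\zeta_i)$, the roots listed with multiplicity, and order them so that $v(\zeta_1)\le v(\zeta_2)\le\cdots\le v(\zeta_n)$. Comparing coefficients gives $a_{n-k}=(-1)^k a_n\,e_k(\zeta_1,\dots,\zeta_n)$, where $e_k$ is the $k$-th elementary symmetric polynomial, so by the ultrametric inequality $v(a_{n-k})\ge v(a_n)+v(\zeta_1)+\cdots+v(\zeta_k)$, with equality whenever the product $\zeta_1\cdots\zeta_k$ of the $k$ smallest-valuation roots strictly dominates every other squarefree degree-$k$ monomial in the $\zeta_i$. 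I then introduce the convex piecewise-linear function $h$ on $[0,n]$ determined by $h(n)=v(a_n)$ and $h(i)-h(i+1)=v(\zeta_{n-i})$, that is, $h(i)=v(a_n)+v(\zeta_1)+\cdots+v(\zeta_{n-i})$; it is convex because $v(\zeta_k)$ is non-decreasing in $k$, and $h(0)=v(a_n)+v(a_0/a_n)=v(a_0)$ since $\prod_i\zeta_i=(-1)^na_0/a_n$. The displayed inequality says that every point $(i,v(a_i))$ lies on or above the graph of $h$ (trivially so when $a_i=0$), while $(0,v(a_0))$ and $(n,v(a_n))$ lie on it; hence the Newton polygon of $f$ lies on or above the graph of $h$.

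Next I would establish equality at the breakpoints of $h$. Writing $m_1<m_2<\cdots$ for the distinct values occurring among the $v(\zeta_i)$ and $d_1,d_2,\dots$ for the sizes of the corresponding maximal blocks, with $D_j=d_1+\cdots+d_j$, the breakpoints of $h$ sit at the indices $i=n-D_j$. At such an index, with $k=D_j$, any squarefree degree-$k$ monomial other than $\zeta_1\cdots\zeta_k$ must omit some $\zeta_\ell$ with $\ell\le k$ and include some $\zeta_{\ell'}$ with $\ell'>k$; since $k$ is a block boundary, $v(\zeta_{\ell'})>v(\zeta_\ell)$, so replacing $\zeta_\ell$ by $\zeta_{\ell'}$ strictly raises the valuation of the monomial. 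Thus $\zeta_1\cdots\zeta_k$ is the unique minimal-valuation term of $e_k$ (in particular $e_k\ne0$, so $a_{n-k}\ne0$), whence $v(a_{n-k})=h(n-k)$. Consequently the Newton polygon is exactly the graph of $h$: its lower edges are the segments lying over the blocks, and the edge over the $j$-th block runs from $(n-D_j,\cdot)$ to $(n-D_{j-1},\cdot)$, has horizontal length $d_j$, and has slope $-m_j$. Matching this with the data of the given edge $S$ yields $s-s'=d_j$ and $m=m_j$, which is the first assertion.

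Finally, for the factorization I would observe that every monic irreducible factor of $f$ in $K[X]$ has all of its roots of one and the same valuation: since the extension of $v$ to $\overline{K}$ is unique, every $K$-automorphism of $\overline{K}$ preserves $v$, and such automorphisms act transitively on the roots of an irreducible polynomial (this also disposes of the inseparable case, as $v(\zeta)$ is determined by $v(\zeta^{p^e})$). Grouping the monic irreducible factors of $f$ according to this common valuation produces monic, non-constant polynomials $f_m\in K[X]$, one for each valuation $m$ occurring among the roots, with $f=a_n\prod_m f_m$; by the first part $\deg f_m$ equals the horizontal length of the lower edge of slope $-m$, and equation~(\ref{equ1}) follows. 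I expect the combinatorial step — that the dominant term of $e_k$ is unique precisely at the block boundaries — to be the only genuine content; the rest is bookkeeping together with the standard uniqueness of the valuation extension, which I would simply cite.
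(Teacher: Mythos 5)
Your proof is correct. Note, however, that the paper does not actually prove Theorem~\ref{newton-poly}: it is quoted as a known result with a citation to Weiss's \emph{Algebraic Number Theory}, so there is no internal argument to compare against. What you have written is the standard direct proof of the Newton polygon theorem, and it is complete: the ultrametric bound $v(a_{n-k})\ge v(a_n)+v(\zeta_1)+\cdots+v(\zeta_k)$ shows every point lies on or above your convex function $h$; the exchange argument at a block boundary $k=D_j$ (every other squarefree degree-$k$ monomial trades some $\zeta_\ell$, $\ell\le k$, for some $\zeta_{\ell'}$, $\ell'>k$, of strictly larger valuation) correctly establishes that the minimal term of $e_k$ is unique there, hence that equality holds and the breakpoints of $h$ are genuine vertices of the lower hull; and since adjacent blocks carry distinct slopes $-m_j$, the lower edges are exactly the segments over the blocks, giving $s-s'=d_j$ and $m=m_j$. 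The factorization part is also handled correctly: uniqueness of the extension of $v$ to $\overline{K}$ (available because $K$ is complete and discretely valued) forces conjugate roots to share a valuation, your remark about $v(\zeta)$ being determined by $v(\zeta^{p^e})$ disposes of inseparable factors, and grouping irreducible factors by the common valuation of their roots yields the monic $f_m$ with $\deg f_m$ equal to the length of the corresponding edge. The only thing your write-up leaves implicit is the hypothesis $a_0\ne 0$, which is what guarantees all $v(\zeta_i)$ are finite so that $h$ is well defined; it would be worth one sentence. Compared with the paper's approach, yours supplies actual content where the authors defer to a reference, at the cost of about a page.
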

\begin{proof}
See~\cite[Prop. 3.1.1]{Wei}.
\end{proof}

If $S$ is a lower edge of the Newton polygon of $f$ then we will abuse 
notation slightly by also calling $S$ a lower edge of $f$. 
\begin{dfn}\label{def-reg}
A polynomial $f(X)=a_nX^n+a_{n-1}X^{n-1}+\dots+a_1X+a_0 \in K[X]$ is 
\emph{regular} if for any lower edge $S$ of $f$ with vertices 
$(s,v(a_s))$ and $(s',v(a_{s'}))$ with $s>s'$ we have:
\begin{enumerate}
\item $S$ contains exactly two points in the set 
$\{(i,v(a_i))\,:\,i=1,\dots,n\}$.
\item ${\rm char}(\kappa)\nmid (s-s')$. 
\end{enumerate}
The polynomial $a_{s'}X^{s'}+a_sX^s$ is called the \emph{lower binomial} of 
$f$ corresponding to the lower edge $S$.
\end{dfn}

\begin{rmks}  
The notion of regularity introduced in the previous definition is 
not generic in the sense of algebraic geometry, i.e., regularity does not hold 
for all polynomials of degree $n$ with coefficients in a non-empty
Zariski open set in $K^{n+1}$. Nevertheless, regularity has already 
proved quite useful in certain algorithmic questions \cite{airr} and, 
for any choice of exponents, is satisfied by infinitely many polynomials.  
A complete discussion of how likely a given $f\!\in\!K[X]$ is to be regular 
would have to include a discussion of probability measures 
on $\Q_p$ and $\Q_p[X]$, and how they compare with the current 
notions of ``natural'' measures on $\R[X]$. These questions are 
actually far from settled (see, e.g., \cite{edelmankostlan,evans}) and are 
thus beyond the scope of this paper. 
\end{rmks} 

\begin{thm}\label{teor-regularity}
Let $f(X)=X^n+a_{n-1}X^{n-1}+\dots+a_1X+a_0 \in K[X]$ be a regular polynomial. Then all factors $f_m(X)$
in equation (\ref{equ1}) are also regular.
\end{thm}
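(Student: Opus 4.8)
The idea is to relate the Newton polygon of each factor $f_m$ to the Newton polygon of $f$, and then to check the two conditions in Definition \ref{def-reg} for $f_m$ using the fact that they hold for $f$.

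First I would recall from Theorem \ref{newton-poly} that $f_m$ is the monic factor of $f$ all of whose roots (in $\overline K$) have valuation exactly $m$. Hence the Newton polygon of $f_m$ consists of a single lower edge $S_m$ of slope $-m$, running from $(0, \deg f_m \cdot m + v(\text{const. term}))$ — more precisely, since $f_m$ is monic of degree $d_m:=\deg f_m$ with all roots of valuation $m$, the constant coefficient has valuation $d_m m$ and the leading coefficient has valuation $0$, so the Newton polygon of $f_m$ has its two outer vertices at $(0, d_m m)$ and $(d_m, 0)$. The key geometric observation is that the $m$-slope edge $S$ of the Newton polygon of $f$ is "assembled" from the edges $S_m$ of the $f_m$: writing $f = a_n \prod_m f_m$ and using that $v$ is a valuation, the lowest points of the Newton polygon of $f$ over an interval of $x$-coordinates corresponding to a fixed slope $-m$ are exactly the Minkowski-type sum of the corresponding edges. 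So the horizontal extent $d_m$ of $S_m$ equals the horizontal extent $s-s'$ of the edge $S$ of $f$ with slope $-m$, and no intermediate lattice point of $f$'s polygon lies on $S$ strictly between its endpoints other than what comes from the $f_m$'s — in particular, since $f$ is regular, $S$ contains exactly two points of $\{(i,v(a_i))\}$, i.e. $S$ has no interior monomial of $f$.

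Next, for the two regularity conditions of $f_m$: the Newton polygon of $f_m$ has a unique lower edge with endpoints $(0,d_m m)$ and $(d_m,0)$ (after possibly clearing a common factor of $X$, but monic-ness and the root-valuation statement force the constant term to be nonzero when $m<\infty$, and $m=\infty$ cannot occur since $a_0 a_n\neq 0$ for the relevant pieces — actually $f$ being monic with a specified product means every root has finite valuation). I must show this edge contains exactly two points of $\{(i,v(b_i))\}$ where $f_m = \sum b_i X^i$, i.e. that every intermediate coefficient $b_i$ of $f_m$ satisfies $v(b_i) > $ the height of the edge at $i$. This is the heart of the argument: if some interior coefficient of $f_m$ lay on its edge, I would want to derive that a corresponding interior monomial appears on the edge $S$ of $f$ of slope $-m$, contradicting condition 1 of regularity for $f$. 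The cleanest way is to pass to residues: multiply $f$ by a power of $\pi$ and rescale $X$ to normalize the slope-$(-m)$ part, reduce modulo $\M$, and observe that the reduction of $f$ "along $S$" is (up to units) the product over all $m'\le m$ (suitably truncated) — so the reduction of $f_m$ along $S_m$ divides the reduction of $f$ along $S$; since the latter is a binomial (two nonzero terms at the extremes, nothing in between), and $f_m$'s reduction is a monic factor of it of full degree $d_m$, the reduction of $f_m$ along $S_m$ must itself be a binomial, which says precisely that $f_m$ has no interior coefficient on $S_m$. For condition 2, the horizontal length of $S_m$ is $d_m = s - s'$, and condition 2 for $f$ gives ${\rm char}(\kappa)\nmid (s-s')$, hence ${\rm char}(\kappa)\nmid d_m$, which is exactly condition 2 for $f_m$.

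The main obstacle I anticipate is making the "reduction along an edge" / "Newton polygon of a product is the Minkowski sum of the Newton polygons of the factors" argument fully rigorous in the discrete-valuation setting — in particular handling the normalization when the slope $m$ is not an integer (one has to pass to a ramified extension or argue with the lattice points carefully), and ensuring that the factorization (\ref{equ1}) interacts correctly with reduction modulo $\M$ (the associated graded / leading-form map is multiplicative, which is the precise statement needed). Once that multiplicativity of the edge-reductions is in hand, both regularity conditions for $f_m$ follow immediately from those for $f$ as sketched above. An alternative, perhaps cleaner route avoiding explicit residue computations: use that $f$ regular $\Rightarrow$ the slope-$(-m)$ edge $S$ has only its two endpoints as monomials, and directly bound $v(b_i)$ for interior $i$ by expressing $f_m$ as a product of linear factors $\prod(X-\zeta_j)$ with $v(\zeta_j)=m$ over $\overline K$, giving $v(b_i)\ge (d_m - i)m$ with equality only possibly at $i=0$; to get the \emph{strict} inequality for $0<i<d_m$ one still needs the input that $f$ has no monomial strictly inside $S$, which is where regularity of $f$ is genuinely used. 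I would present the residue/leading-form version as the main argument.
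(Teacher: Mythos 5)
Your proposal is correct, and your primary argument (route (a)) is genuinely different from the paper's. The paper argues by brute force with symmetric functions: writing $g=f_{m_{j+1}}=\prod(X-\alpha_i)$ over the roots of valuation $m_{j+1}$ and letting $\delta=\pm\alpha_1\cdots\alpha_{s_j}$ be the product of the roots of strictly smaller valuation, it establishes the identity $a_{n-s_j-k}=\delta b_{n_j-k}+\beta$ (equation (\ref{eq1})), shows $v(\beta)$ strictly exceeds the height of the edge of $f$ at the relevant abscissa, and then uses regularity of $f$ (which gives the same strict inequality for $v(a_{n-s_j-k})$) to conclude $v(b_{n_j-k})>km_{j+1}$. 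Your leading-form argument reaches the same conclusion by observing that $L_m(f)=a_n\prod_{m'}L_m(f_{m'})$, that $L_m(f_{m'})$ is a monomial for $m'\neq m$, and that $L_m(f)$ is a binomial by regularity, forcing $L_m(f_m)$ to be a binomial; your remark that the cofactor is a \emph{monomial} of the complementary degree (not just some factor of a binomial) is exactly the point that makes this work. Your approach is conceptually cleaner and makes transparent where each hypothesis enters, but it leans on the multiplicativity of leading forms in the associated graded ring (and the attendant care with fractional slopes), which you correctly flag as the main thing to nail down; the paper's computation is more pedestrian but entirely self-contained. Your treatment of condition (2) — the horizontal extent of the unique edge of $f_m$ equals $s-s'$, so ${\rm char}(\kappa)\nmid\deg f_m$ is inherited from $f$ — is the same as the paper's. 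Your alternative route (b) is essentially the paper's argument, but as you yourself note, you stop at the weak inequality $v(b_i)\geq(d_m-i)m$ and do not carry out the step (the paper's equation (\ref{eq1})) that converts regularity of $f$ into the required strict inequality; had that been your main argument, it would be incomplete as written.
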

\begin{proof}
Via Theorem~\ref{newton-poly}, the Newton polygon of the factor $f_m(X)$ 
has exactly $1$ lower edge, lying in the first quadrant and intersecting both 
the coordinate axes, and its slope is $-m\leq 0$ since $f_m$ is monic. 
In particular, all factors $f_m(X)$
satisfy condition (2) of regularity. Therefore it is enough to show that they 
also satisfy condition (1) in definition~\ref{def-reg}.

Let $\alpha_1,\alpha_2,\ldots,\alpha_n\in\overline{K}$ be all the roots of $f$. Assume that

\begin{minipage}{0.45\textwidth}
\begin{align*}
v(a_1) & = \cdots=v(a_{s_1})=m_1 \\
v(a_{s_1+1}) & = \cdots=v(a_{s_2})=m_2 \\
 &  \qquad \vdots  \\
v(a_{s_j+1}) & = \cdots=v(a_{s_{j+1}})=m_{j+1} \\
 &  \qquad \vdots  \\
v(a_{s_{t-1}+1}) & = \cdots=v(a_{n})=m_{t}
\end{align*}
\end{minipage}
\begin{minipage}{0.6\textwidth}
\includegraphics[scale=0.4]{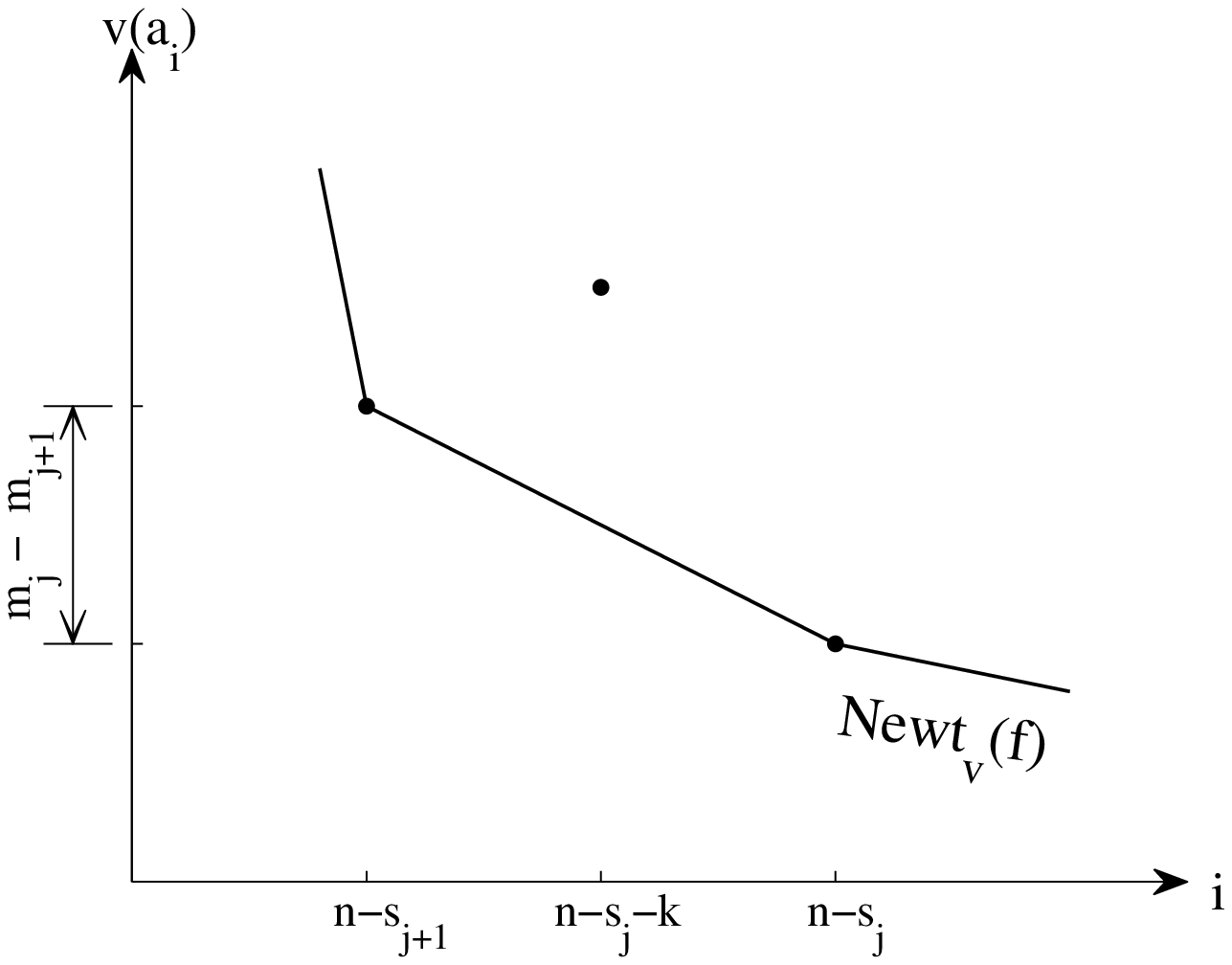}
\end{minipage}

\noindent 
where $m_1<m_2<\dots<m_{t}$. In order to keep consistent notation we set 
$s_0=0$ and $s_{t}=n$. Let $g$ be the factor $f_{m_{j+1}}$ of $f$ and let 
$n_j=s_{j+1}-s_{j}$ be the degree of $g$. Then 
\begin{align*}
 g(X)&=(X-\alpha_{s_{j}+1})(X-\alpha_{s_{j}+2})\cdots(X-\alpha_{s_{j+1}})\\
     &=X^{n_j}+b_{n_j-1}X^{n_j-1}+\cdots+b_1X+b_0.
\end{align*}

The coefficients $b_{n_j-k}$ and $a_{n-s_j-k}$, with $0\leq k\leq n_j$, can be 
written in terms of the roots of $f$ as

\begin{minipage}{0.45\textwidth}
\begin{align*}
b_{n_j-k} & = (-1)^k\!\!\!\sum_{\genfrac{}{}{0pt}{}{I\subseteq \{s_j+1,\ldots,
s_{j+1}\}}{|I|=k}}\!\prod_{i\in I}\alpha_i \\[0.5cm]
a_{n-s_j-k} & = (-1)^{s_j+k}\!\!\!\sum_{\genfrac{}{}{0pt}{}{I\subseteq \{1,
\ldots,n\}}{|I|=s_j+k}}\!\prod_{i\in I}\alpha_i
\end{align*}
\end{minipage}
\begin{minipage}{0.6\textwidth}
\includegraphics[scale=0.4]{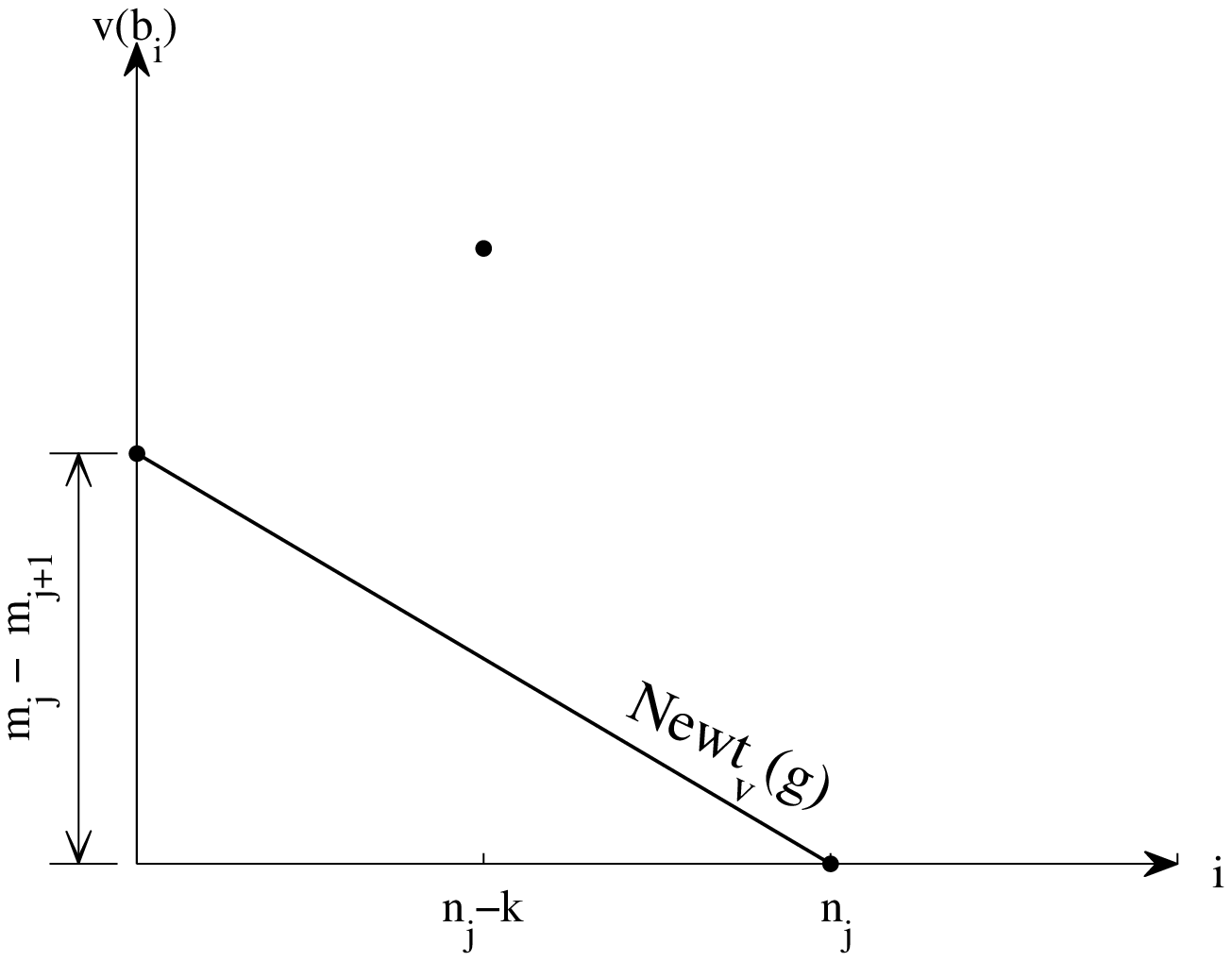}
\end{minipage}

\noindent 
where, as usual, an empty product is defined as $1$.
Note that in the case $k=0$, the term
$\delta=(-1)^{s_j}\alpha_1\alpha_2\cdots\alpha_{s_j}$ appears in the sum 
corresponding to~$a_{n-s_j}$ and has (strictly) the minimum possible valuation. 
This means that $v(\delta)=v(a_{n-s_j})=n_0m_1+n_1m_2+\cdots+n_{j-1}m_j$.
When $0<k<n_j$ we can thus write
\begin{equation}\label{eq1}
a_{n-s_j-k}=\delta b_{n_j-k}+\beta
\end{equation}
where $\beta\in K$ is the sum of all the terms appearing in $a_{n-s_j-k}$ with $I\not\subseteq(0,s_{j+1}]$
or with $I\subseteq (0,s_{j+1}]$ but $I\cap (0,s_j]\neq (0,s_j]$.
This implies that $v(\beta)>n_0m_1+n_1m_2+\cdots+n_{j-1}m_j+km_{j+1}$.
Since $f$ is a regular polynomial, we have that $v(a_{n-s_j-k})>n_0m_1+n_1m_2+\cdots+n_{j-1}m_j+km_{j+1}$ by
the first item in definition~\ref{def-reg}, and hence $v(b_{n_j-k})>km_{j+1}$.
\end{proof}

\section{Roots of the Reduced Polynomial}\label{sec-mod}

Consider a monic polynomial $f(X)=X^n+a_{n-1}X^{n-1}+\dots +a_1X+a_0\in A[X]$.
Assume that the discriminant $\Delta=\textrm{Res}_X(f,f')$ is non-zero and let
$r=v(\Delta)$. In particular we are assuming that $f$ has no multiple roots in $\overline{K}$.
Throughout this section, $f$ and $r$ are fixed. The following lemma is a property of algebraic
integers in the ultrametric setting.

\begin{lem}\label{lem-ria}
For any $\alpha\in\overline{K}$ such that $f(\alpha)=0$, we have $v(\alpha)\geq 0$.
\end{lem}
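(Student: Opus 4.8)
The statement to prove is Lemma~\ref{lem-ria}: if $f\in A[X]$ is monic and $f(\alpha)=0$ for some $\alpha\in\overline K$, then $v(\alpha)\ge 0$. This is the standard fact that an algebraic integer (root of a monic polynomial with coefficients in the valuation ring) lies in the valuation ring of the extension. The plan is to argue by contradiction using the ultrametric (strong triangle) inequality applied to the equation $f(\alpha)=0$.

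Let me sketch it. Write $f(X)=X^n+a_{n-1}X^{n-1}+\cdots+a_1X+a_0$ with $v(a_i)\ge 0$ for all $i$. Suppose, for contradiction, that $v(\alpha)<0$. Then $v(\alpha^n)=n\,v(\alpha)$, while for each $i<n$ we have $v(a_i\alpha^i)=v(a_i)+i\,v(\alpha)\ge i\,v(\alpha)>n\,v(\alpha)$, where the last strict inequality uses $v(\alpha)<0$ and $i<n$. Hence the term $X^n$, evaluated at $\alpha$, has strictly smaller valuation than every other term $a_i\alpha^i$. By the ultrametric inequality, the valuation of a sum equals the valuation of the unique term of strictly minimal valuation, so $v(f(\alpha))=n\,v(\alpha)<\infty$, contradicting $f(\alpha)=0$ (which forces $v(f(\alpha))=+\infty$). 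Therefore $v(\alpha)\ge 0$.

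There is essentially no obstacle here: the only thing to be careful about is that $v$ has already been extended to $\overline K$ (which the excerpt states explicitly at the end of the second paragraph of section~\ref{sec-newton}), so that $v(\alpha)$ makes sense, and that the extended valuation still satisfies the ultrametric inequality and is additive on products — both standard and implicitly available. One could also phrase the same argument multiplicatively via an absolute value $|\cdot|$, but the valuation formulation is cleanest given the paper's conventions. No use of the discriminant or of $r$ is needed for this particular lemma.
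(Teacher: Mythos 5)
Your proof is correct and is essentially the same argument as the paper's: both assume $v(\alpha)<0$ and derive a contradiction from the ultrametric inequality applied to $f(\alpha)=0$, the only cosmetic difference being that you invoke the ``unique strict minimum forces equality'' form on the whole sum, while the paper isolates $\alpha^n$ and uses the plain $v(\sum)\ge\min$ bound to conclude $nv(\alpha)\ge(n-1)v(\alpha)$. No issues.
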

\begin{proof}
Assume that $v(\alpha)<0$. Since $f(\alpha)=0$, we have that
\begin{align*}
nv(\alpha)&=v(\alpha^n)=v(a_{n-1}\alpha^{n-1}+\cdots+a_0)\geq\min\{v(a_i\alpha^i)\,:\,0\leq i<n\} \\
          &\geq\min\{v(\alpha^i)\,:\,0\leq i<n\}=(n-1)v(\alpha)
\end{align*}
which implies $v(\alpha)\geq 0$, a contradiction.
\end{proof}

The following lemma gives a lower bound for the distance between roots in terms
of the valuation $r$ of the discriminant.
\begin{lem}\label{lem-dist}
If $f(X)=\prod_{i=1}^{n}(X-\alpha_i)$ with $\alpha_i\in\overline{K}$ for $i=1,\ldots,n$, then
$$v(\alpha_i-\alpha_j)\leq\frac{r}{2}\qquad \forall\, i\neq j.$$
\end{lem}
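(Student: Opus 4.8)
The plan is to connect the valuation of the discriminant to the pairwise differences of the roots via the standard resultant formula. Recall that for a monic polynomial $f(X)=\prod_{i=1}^n(X-\alpha_i)$, the discriminant satisfies $\Delta=\prod_{i<j}(\alpha_i-\alpha_j)^2$, so that
$$r=v(\Delta)=2\sum_{i<j}v(\alpha_i-\alpha_j).$$
I would first observe, using Lemma~\ref{lem-ria}, that every root $\alpha_i$ has $v(\alpha_i)\geq 0$, hence $\alpha_i-\alpha_j\in A$ and $v(\alpha_i-\alpha_j)\geq 0$ for all $i,j$. This nonnegativity is what lets me bound one term of the sum by the whole sum.

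The key step is then simply to isolate a single term: for any fixed pair $i\neq j$,
$$2v(\alpha_i-\alpha_j)\leq 2\sum_{k<l}v(\alpha_k-\alpha_l)=v(\Delta)=r,$$
because all the other summands $v(\alpha_k-\alpha_l)$ are $\geq 0$. Dividing by $2$ gives $v(\alpha_i-\alpha_j)\leq r/2$, which is exactly the claimed bound. So the proof is essentially a one-line deduction once the resultant/discriminant identity is in hand.

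The only real care needed is to justify the formula $\mathrm{Res}_X(f,f')=\pm\prod_{i<j}(\alpha_i-\alpha_j)^2$ (up to the leading coefficient, which is $1$ here since $f$ is monic) and to make sure the valuation $v$ — extended uniquely to $\overline{K}$ as stipulated in section~\ref{sec-newton} — is additive on products, so that $v$ of the product becomes the sum of the $v(\alpha_i-\alpha_j)$. Both are standard: the discriminant formula is classical, and multiplicativity of the extended valuation is part of the setup. I do not anticipate a genuine obstacle here; the statement is a routine consequence of Lemma~\ref{lem-ria} together with the discriminant-as-product-of-root-differences identity. If one wanted to avoid invoking the discriminant formula directly, an alternative would be to note $\Delta=\pm\prod_i f'(\alpha_i)$ and $f'(\alpha_i)=\prod_{k\neq i}(\alpha_i-\alpha_k)$, giving the same product $\prod_{i\neq j}(\alpha_i-\alpha_j)$ up to sign, and then proceed identically.
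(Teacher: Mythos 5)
Your proposal is correct and follows exactly the paper's own argument: write $r=v(\Delta)=2\sum_{i<j}v(\alpha_i-\alpha_j)$, note via Lemma~\ref{lem-ria} that each summand is non-negative, and isolate a single term. No meaningful difference in approach.
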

\begin{proof}
From the formula of the discriminant
$\Delta=\prod_{1\leq i<j\leq n}(\alpha_i-\alpha_j)^2$
we get $r=2\sum_{1\leq i<j\leq n}v(\alpha_i-\alpha_j)$. Since all the roots satisfy $v(\alpha_i)\geq 0$,
all the terms in this sum are non-negative. Therefore $v(\alpha_i-\alpha_j)$ can not exceed $r/2$ for any
$i\neq j$.
\end{proof}

The bound of Lemma~\ref{lem-dist} is sharp. For instance, the polynomial $f=x(x-p)\in\mathbb{Q}_p[X]$ has discriminant $\Delta(f)=p^2$
of valuation $v_p(\Delta(f))=2$, and the valuation of the difference of the roots is $1$.
We can also use Lemma~\ref{lem-dist} to derive an upper bound for the number of roots of $f$ in $K$.

\begin{cor}\label{cor-ub}
The number of roots of $f$ in $K$ is not greater than $q^{[r/2]+1}$.
\end{cor}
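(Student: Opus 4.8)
The plan is to cover the roots of $f$ in $K$ by the residue classes modulo $\pi^{[r/2]+1}$ and argue that each such class contains at most one root. First I would note that every root $\alpha$ of $f$ in $K$ lies in $A$ by Lemma~\ref{lem-ria}, so it makes sense to reduce it modulo $\M^{[r/2]+1}$; since $A/\M^{[r/2]+1}$ has exactly $q^{[r/2]+1}$ elements, it suffices to show that the reduction map, restricted to the (finite) set of roots of $f$ in $K$, is injective. This immediately gives the stated bound $q^{[r/2]+1}$.

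The heart of the argument is therefore the injectivity claim: if $\alpha,\beta\in K$ are two distinct roots of $f$ with $\alpha\equiv\beta\pmod{\M^{[r/2]+1}}$, then $v(\alpha-\beta)\geq [r/2]+1 > r/2$, contradicting Lemma~\ref{lem-dist}. Here I would be slightly careful about the two ways the valuation can be normalized: Lemma~\ref{lem-dist} is phrased for roots in $\overline{K}$, and $\alpha,\beta\in K$ are in particular in $\overline{K}$, so the lemma applies verbatim and forces $v(\alpha-\beta)\leq r/2$. Since $[r/2]+1$ is a strict integer overestimate of $r/2$ (because $[r/2]\geq r/2 - 1$, hence $[r/2]+1 > r/2$), the congruence condition $\alpha\equiv\beta\pmod{\M^{[r/2]+1}}$ is genuinely incompatible with $\alpha\neq\beta$. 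Thus distinct roots land in distinct residue classes.

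I do not expect a real obstacle here — the corollary is essentially a repackaging of Lemma~\ref{lem-dist} via the pigeonhole principle on $A/\M^{[r/2]+1}$. The only subtlety worth stating explicitly is that $f$ has finitely many roots in $K$ at all (which follows from $\deg f = n$, or from the non-vanishing discriminant), so that counting residue classes is meaningful; and that one should use the integer $[r/2]+1$ rather than $[r/2]$ precisely to get the strict inequality $> r/2$ that contradicts the lemma. Everything else is a one-line pigeonhole count.
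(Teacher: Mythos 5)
Your proof is correct and follows the same route as the paper: a pigeonhole argument on the $q^{[r/2]+1}$ residue classes of $A/\M^{[r/2]+1}$, with injectivity of the reduction map on roots following from Lemma~\ref{lem-dist} via the strict inequality $[r/2]+1>r/2$. The paper's proof is a one-line version of exactly this.
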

\begin{proof}
Otherwise we would have two roots $x,y\in A$ with $x\equiv y\bmod{\pi^{[r/2]+1}}$, that is,
$v(x-y)\geq [r/2]+1>r/2$ in contradiction with Lemma~\ref{lem-dist}.
\end{proof}

Let $f_N\in \left(A/\pi^NA\right)[X]$ denote the reduction of the polynomial $f$ modulo $\pi^N$.  We denote by
$\beta_1,\ldots,\beta_l\in A$ the roots of $f$ in $K$ (by Lemma~\ref{lem-ria} we know that they are in $A$).
It is clear that the reduction of any of these roots modulo $\pi^N$ is a root of $f_N$. Unfortunately, the reduction
modulo $\pi^N$ does not give a bijection between the set of roots of $f$ in $K$ and the set of roots of~$f_N$ in
$A/\pi^NA$ in general. However, we will show that the reduction homomorphism 
is a bijection between the roots of $f$ and classes of roots of $f_N$ under 
a particular equivalence relation. The inverse of the reduction homomorphism
is given by a reformulation of the standard Hensel's lemma.

\bigskip

We denote by $\overline{x}$ the reduction modulo $\pi^NA$ of $x\in A$.

\begin{dfn}
Let $S_N\subseteq A/\pi^NA$ be the set of roots of $f_N$. Two roots $x,y\in S_N$ are in the same equivalence class
(denoted by $x\approx y$) if and only if either $x=y$ and $N\leq r$ or $x\equiv y\bmod{\overline{\pi}^{r+1}}$ and $N>r$.
The class containing a root $x\in S_N$ is written $[x]$ and the set of classes is written $S_N/\approx$.
\end{dfn}

\begin{lem}\label{lem1}
If $N>r$ then the number of roots of $f$ in $K$ is not greater than $|S_N/\approx|$.
\end{lem}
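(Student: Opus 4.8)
The plan is to show that the reduction homomorphism, restricted to the roots of $f$ in $K$, lands in $S_N$ and is injective modulo $\approx$; the lemma then follows immediately by counting. The essential content is that two \emph{distinct} roots $\beta_i, \beta_j \in A$ of $f$ cannot have reductions that are $\approx$-equivalent when $N > r$.

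First I would note that each $\overline{\beta_i} \in S_N$: since $f(\beta_i) = 0$ in $A$, reducing modulo $\pi^N$ gives $f_N(\overline{\beta_i}) = 0$, so $\overline{\beta_i}$ is indeed a root of $f_N$. This gives a well-defined map $\beta_i \mapsto [\overline{\beta_i}]$ from the set $\{\beta_1, \dots, \beta_l\}$ of roots of $f$ in $K$ to $S_N/\!\approx$. It then suffices to prove this map is injective, which gives $l = |\{\beta_1,\dots,\beta_l\}| \le |S_N/\!\approx|$.

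For injectivity, suppose $[\overline{\beta_i}] = [\overline{\beta_j}]$ with $i \neq j$. Since $N > r$, the definition of $\approx$ means $\overline{\beta_i} \equiv \overline{\beta_j} \bmod \overline{\pi}^{\,r+1}$ in $A/\pi^N A$; because $N > r$, i.e.\ $N \ge r+1$, this lifts to $\beta_i \equiv \beta_j \bmod \pi^{r+1}$ in $A$, that is, $v(\beta_i - \beta_j) \ge r+1 > r/2$ (using $r \ge 0$, which holds since $\Delta \in A$). But $\beta_i, \beta_j$ are two distinct roots of $f$ in $\overline{K}$, so Lemma~\ref{lem-dist} gives $v(\beta_i - \beta_j) \le r/2$, a contradiction. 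Hence the map is injective and the bound follows.

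The argument is essentially routine; the only point requiring a little care is the bookkeeping with the two different moduli: the equivalence $\approx$ is defined on $A/\pi^N A$ via congruence mod $\overline{\pi}^{\,r+1}$, and one must observe that since $N \ge r+1$ this congruence in the quotient ring is equivalent to the congruence $\beta_i \equiv \beta_j \bmod \pi^{r+1}$ upstairs in $A$, so that the valuation-theoretic conclusion needed to invoke Lemma~\ref{lem-dist} is legitimate. I do not anticipate any genuine obstacle here; this lemma is the easy half of the bijection, with the reverse inequality (via Hensel lifting) being the substantive direction, presumably established in the theorem that follows.
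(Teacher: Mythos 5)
Your proof is correct and follows essentially the same route as the paper: reduce the roots $\beta_1,\dots,\beta_l$ modulo $\pi^N$, note they land in $S_N$, and use Lemma~\ref{lem-dist} together with $N>r$ to see that distinct roots yield $\approx$-inequivalent reductions (you phrase this contrapositively, the paper directly, but the content is identical). No gaps.
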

\begin{proof}
Write $f(X)=(X-\beta_1)(X-\beta_2)\dots(X-\beta_l)g(X)$ where $g$ has no roots 
in $K$.
Let $\beta_{i,N}=\overline{\beta_i}\in A/\pi^NA$ be the reduction of $\beta_i$ 
modulo $\pi^NA$. Since
this reduction is a ring homomorphism, $\beta_{i,N}$ is a root of $f_N$. Take 
$1\leq i<j\leq l$. By
Lemma~\ref{lem-dist}, we have $v(\beta_i-\beta_j)\leq r/2\leq r$, i.e., 
$\beta_i\not\equiv\beta_j\bmod{\pi^{r+1}}$.
Since $N>r$, we also have that $\overline{\beta_i}\not\equiv\overline{\beta_j}\bmod{\overline{\pi}^{r+1}}$.
This implies that $\beta_{i,N}\not\approx\beta_{j,N}$ and hence $[\beta_{i,N}]\neq[\beta_{j,N}]$.
\end{proof}

\begin{lem}\label{lem-res}
Let $\gamma\in A$ be such that $v(f(\gamma))>r$. Then $v(f'(\gamma))\leq r$.
\end{lem}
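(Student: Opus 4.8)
The plan is to exploit the classical Bézout-type identity for resultants. Since $f\in A[X]$ and $f'\in A[X]$, applying Cramer's rule to the Sylvester matrix of $f$ and $f'$ produces polynomials $u,v\in A[X]$ (with $\deg u<\deg f'$ and $\deg v<\deg f$) satisfying the polynomial identity
\begin{equation*}
u(X)f(X)+v(X)f'(X)=\mathrm{Res}_X(f,f')=\Delta .
\end{equation*}
The key point I would stress here is that the coefficients of $u$ and $v$ are integer polynomial expressions in the coefficients of $f$ and $f'$, hence lie in $A$; this is what makes the valuation estimates below work, and it is the only place where $f\in A[X]$ is used.

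Next I would simply evaluate this identity at the given point $\gamma\in A$, obtaining $u(\gamma)f(\gamma)+v(\gamma)f'(\gamma)=\Delta$. Because $\gamma\in A$ and $u,v\in A[X]$, both $u(\gamma)$ and $v(\gamma)$ lie in $A$, so $v(u(\gamma))\geq 0$ and $v(v(\gamma))\geq 0$. The hypothesis $v(f(\gamma))>r$ then gives
\begin{equation*}
v\bigl(u(\gamma)f(\gamma)\bigr)=v(u(\gamma))+v(f(\gamma))\geq v(f(\gamma))>r=v(\Delta).
\end{equation*}

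Now I would finish with the ultrametric triangle inequality: from $v(\gamma)f'(\gamma)$ — I mean $v\bigl(v(\gamma)f'(\gamma)\bigr)=v\bigl(\Delta-u(\gamma)f(\gamma)\bigr)$ and the fact that $v(\Delta)=r$ is strictly smaller than $v(u(\gamma)f(\gamma))$, it follows that $v\bigl(v(\gamma)f'(\gamma)\bigr)=r$. Hence $v(v(\gamma))+v(f'(\gamma))=r$, and since $v(v(\gamma))\geq 0$ we conclude $v(f'(\gamma))\leq r$, as desired.

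There is essentially no serious obstacle here; the only thing requiring care is the assertion that the Bézout cofactors can be taken with coefficients in $A$ (rather than merely in $K$), which is immediate from the Sylvester-matrix/Cramer's-rule description of the resultant together with $f,f'\in A[X]$. If one prefers to avoid invoking that description, one can instead cite the standard fact that $\mathrm{Res}_X(f,f')$ lies in the ideal $(f,f')\cap A$ of $A[X]\cap A$, which yields the same identity.
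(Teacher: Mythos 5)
Your proof is correct and is essentially identical to the paper's: both write $\Delta=a(X)f(X)+b(X)f'(X)$ with cofactors in $A[X]$, evaluate at $\gamma$, and use the ultrametric equality $v\bigl(b(\gamma)f'(\gamma)\bigr)=v\bigl(\Delta-a(\gamma)f(\gamma)\bigr)=r$ to conclude. The only differences are cosmetic: you spell out the Sylvester-matrix justification that the cofactors lie in $A[X]$ (which the paper leaves implicit), and you should rename the cofactor $v$ to avoid clashing with the valuation.
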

\begin{proof}
Write $\Delta=a(X)f(X)+b(X)f'(X)$ with $a,b\in A[X]$ and evaluate at $X=\gamma$. Since $v(a(\gamma))\geq 0$, we have that
$v(a(\gamma)f(\gamma))>r$, and therefore $v(b(\gamma)f'(\gamma))=v(\Delta-a(\gamma)f(\gamma))=r$. We conclude that $v(f'(\gamma))\leq r$
because $v(b(\gamma))\geq 0$.
\end{proof}

In order to proceed, we need the following version of Hensel's lemma. This lemma allows us to lift an approximate
root of $f$ to an exact root.

\begin{lem}[Hensel]
If $\gamma\in A$ satisfies $v(f(\gamma)/f'(\gamma)^2)>0$ then there exists a root $\xi\in A$
of $f$ such that $v(\xi-\gamma)=v(f(\gamma)/f'(\gamma))$.
\end{lem}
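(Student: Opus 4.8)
The plan is to use the classical Newton iteration and show it converges in the complete field $K$.  Starting from $\gamma_0=\gamma$, I would define recursively $\gamma_{k+1}=\gamma_k-f(\gamma_k)/f'(\gamma_k)$, and prove by induction that each $\gamma_k$ lies in $A$, that $f'(\gamma_k)\neq 0$ (so the iteration makes sense), and that the ``error'' $v(f(\gamma_k)/f'(\gamma_k)^2)$ strictly increases, in fact doubles the distance from zero at each step.  The key algebraic input is the Taylor expansion of $f$ around $\gamma_k$: writing $f(\gamma_k+h)=f(\gamma_k)+f'(\gamma_k)h+c_2h^2+\cdots$ with $c_i\in A$ (here using that $f\in A[X]$, so all Taylor coefficients are in $A$), and substituting $h=-f(\gamma_k)/f'(\gamma_k)$, the linear term cancels the constant term and one is left with $f(\gamma_{k+1})=\sum_{i\geq 2}c_i(-f(\gamma_k)/f'(\gamma_k))^i$, whose valuation is at least $2v(f(\gamma_k))-2v(f'(\gamma_k))$.

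Let me set $e_k=v(f(\gamma_k))-2v(f'(\gamma_k))$, so the hypothesis is $e_0>0$.  First I would check that $v(\gamma_{k+1}-\gamma_k)=v(f(\gamma_k))-v(f'(\gamma_k))\geq e_k>0$, so $\gamma_{k+1}\in A$ and moreover $\gamma_{k+1}\equiv\gamma_k \bmod \M$.  Next, to control $f'(\gamma_{k+1})$: by the Taylor expansion of $f'$ around $\gamma_k$, $v(f'(\gamma_{k+1})-f'(\gamma_k))\geq v(\gamma_{k+1}-\gamma_k)=v(f(\gamma_k))-v(f'(\gamma_k))$; since $e_k>0$ this quantity strictly exceeds $v(f'(\gamma_k))$, so by the ultrametric inequality $v(f'(\gamma_{k+1}))=v(f'(\gamma_k))$ --- the valuation of the derivative is locked in from the start.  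Combining with the bound $v(f(\gamma_{k+1}))\geq 2v(f(\gamma_k))-2v(f'(\gamma_k))$ gives $e_{k+1}\geq 2e_k$, hence $e_k\to\infty$.

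It then follows that $v(\gamma_{k+1}-\gamma_k)=e_k+v(f'(\gamma_k))=e_k+v(f'(\gamma_0))\to\infty$, so $(\gamma_k)$ is a Cauchy sequence; by completeness of $K$ it converges to some $\xi\in A$.  Continuity of $f$ gives $v(f(\xi))=\lim v(f(\gamma_k))=\infty$, i.e.\ $f(\xi)=0$.  Finally, $v(\xi-\gamma)=v(\xi-\gamma_0)$: since $\gamma_{k+1}-\gamma_0=\sum_{j=0}^{k}(\gamma_{j+1}-\gamma_j)$ and the term $j=0$ has valuation $e_0+v(f'(\gamma))$ strictly smaller than all later terms (as $e_j$ is strictly increasing), the ultrametric inequality pins down $v(\gamma_{k+1}-\gamma_0)=v(\gamma_1-\gamma_0)=v(f(\gamma)/f'(\gamma))$ for every $k$, and passing to the limit $v(\xi-\gamma)=v(f(\gamma)/f'(\gamma))$, as claimed.

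The main obstacle is the bookkeeping around the derivative: one must verify at every step that $f'(\gamma_k)\neq 0$ (so the recursion is well-defined) and that its valuation never changes, and this is exactly where the strict inequality $e_k>0$ is used via the ultrametric ``min of unequal valuations'' principle; getting the inductive hypotheses packaged correctly (integrality of $\gamma_k$, constancy of $v(f'(\gamma_k))$, and the doubling of $e_k$) so that they feed into each other is the crux.  The Taylor-coefficient-integrality facts and the convergence argument are then routine given completeness.
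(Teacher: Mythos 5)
Your proof is correct, and it is the standard Newton-iteration argument for Hensel's lemma; the paper itself gives no proof, simply citing Robert's \emph{A Course in $p$-adic Analysis}, where essentially this same argument appears. The inductive package you identify (integrality of $\gamma_k$, constancy of $v(f'(\gamma_k))$ via the ultrametric inequality, and the doubling $e_{k+1}\geq 2e_k$) is exactly the right bookkeeping, and the observation that the first increment $\gamma_1-\gamma_0$ has strictly smaller valuation than all later ones correctly yields the precise equality $v(\xi-\gamma)=v(f(\gamma)/f'(\gamma))$ rather than a mere inequality.
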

\begin{proof}
See~\cite[Sec. 1.5, Ch. 2]{Rob}.
\end{proof}

\begin{lem}\label{lem2}
If $N>2r$ then the number of roots of $f$ in $K$ is not less than $|S_N/\approx|$.
\end{lem}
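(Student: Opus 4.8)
The plan is to produce an injection from $S_N/\!\approx$ into the set $\{\beta_1,\dots,\beta_l\}$ of roots of $f$ in $K$; once such an injection is in hand, $|S_N/\!\approx|\le l$ follows immediately, which is exactly the assertion.

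First I would define a map $\Phi\colon S_N\to\{\beta_1,\dots,\beta_l\}$ by Hensel lifting. Given $x\in S_N$, pick any $\gamma\in A$ reducing to $x$ modulo $\pi^N$; since $f_N(x)=0$ this means $v(f(\gamma))\ge N$. As $N>2r>r$, Lemma~\ref{lem-res} yields $v(f'(\gamma))\le r$, so $v\bigl(f(\gamma)/f'(\gamma)^2\bigr)\ge N-2r>0$ and Hensel's lemma supplies a root $\xi\in A$ of $f$ with $v(\xi-\gamma)=v(f(\gamma)/f'(\gamma))\ge N-r$; set $\Phi(x)=\xi$.

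Next I would show $\Phi$ is independent of the choice of lift $\gamma$ and is constant on $\approx$-classes, so that it descends to a map $\overline{\Phi}\colon S_N/\!\approx\,\to\{\beta_1,\dots,\beta_l\}$. The tool for both is Lemma~\ref{lem-dist}: two distinct roots of $f$ satisfy $v(\beta_i-\beta_j)\le r/2<N-r$. If $\gamma,\gamma'$ both reduce to $x$, then $v(\gamma-\gamma')\ge N$, and the ultrametric inequality forces the two Hensel lifts $\xi,\xi'$ to satisfy $v(\xi-\xi')\ge N-r>r/2$, hence $\xi=\xi'$. If $x\approx y$, choose lifts $\gamma,\gamma'$ with $v(\gamma-\gamma')\ge r+1$; since $N>2r$ gives $N-r\ge r+1$, the same computation yields $v(\Phi(x)-\Phi(y))\ge r+1>r/2$, so $\Phi(x)=\Phi(y)$. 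Finally, for injectivity of $\overline{\Phi}$, suppose $\overline{\Phi}([x])=\overline{\Phi}([y])=\beta$; then $v(\gamma-\beta)\ge N-r$ and $v(\gamma'-\beta)\ge N-r$ for lifts $\gamma,\gamma'$ of $x,y$, so $v(\gamma-\gamma')\ge N-r\ge r+1$, i.e.\ $x\equiv y\bmod\overline{\pi}^{r+1}$, which (as $N>r$) means $x\approx y$ and $[x]=[y]$.

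The one place demanding attention --- and the reason the hypothesis is $N>2r$ rather than the weaker $N>r$ of Lemma~\ref{lem1} --- is that a single inequality $N-2r>0$ must do two jobs at once: it has to make the Hensel hypothesis $v(f(\gamma)/f'(\gamma)^2)>0$ hold, and it has to push the lift close enough, $v(\xi-\gamma)\ge N-r>r/2$, for Lemma~\ref{lem-dist} to be applicable in eliminating all the choices made along the way. Beyond tracking these valuations the argument is routine ultrametric bookkeeping, and I do not anticipate a genuine obstacle.
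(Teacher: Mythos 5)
Your proposal is correct and follows essentially the same route as the paper: Hensel-lift an arbitrary lift of a root of $f_N$ using Lemma~\ref{lem-res} to control $v(f'(\gamma))$, then use the root-separation bound of Lemma~\ref{lem-dist} to show the resulting map from $S_N/\approx$ to the roots of $f$ is well defined and injective. Your write-up is in fact slightly more explicit than the paper's about independence of the choice of lift $\gamma$ and about the injectivity step, but the underlying argument is identical.
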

\begin{proof}
Take $[\beta]\in S_N/\approx$ and take some $\gamma\in A$ such that $\beta=\overline{\gamma}$.
Since $\overline{f(\gamma)}=f_N(\beta)=\overline{0}$, we have that $v(f(\gamma))\geq N>2r\geq r$.
By Lemma~\ref{lem-res} we have that $v(f'(\gamma))\leq r$ and then $v(f(\gamma)/f'(\gamma)^2)>0$.
By Hensel's lemma, there exists $\xi\in A$ such that $f(\xi)=0$ and $\xi\equiv\gamma\bmod{\pi^{N-r}}$
because $v(f(\gamma)/f'(\gamma))\geq N-r$.
Since $N-r>r$ we have that $\xi\equiv\gamma\bmod{\pi^{r+1}}$ and also
$\overline{\xi}\equiv\overline{\gamma}\bmod{\overline{\pi}^{r+1}}$ because $N>r$.
This means that $[\beta]=[\overline{\xi}]$.

Note that if $\xi$ and $\xi'$ are two different roots of $f$ in $A$, then $v(\xi-\xi')\leq r/2\leq r$ by Lemma~\ref{lem-dist}.
This implies that $\xi\not\equiv\xi'\bmod{\pi^{r+1}}$, $\overline{\xi}\not\equiv{\overline{\xi'}}\bmod{\overline{\pi}^{r+1}}$ and
$[\overline{\xi}]\neq[\overline{\xi'}]$. We conclude from here that the procedure described above gives a well defined map
from the set $S_N/\approx$ to the set of roots of $f$ in $K$ (we can not lift the same class to two different roots).
Moreover, this map is injective, because it is possible to reconstruct the equivalence class from the lifted root.
\end{proof}

As an immediate consequence of Lemmas~\ref{lem1} and~\ref{lem2}, we obtain a bijection between the number
of roots of $f$ in $K$ and the number of equivalence classes. The following theorem is the main result of this
section.

\begin{thm}\label{teor1}
For any $N>2r$, the number of roots of $f$ in $K$ is equal to $|S_N/\approx|$. More precisely, the
map $x\mapsto[\overline{x}]$ is a bijection between the set of roots of $f$ in $A$ (or in $K$) and
$S_N/\approx$.
\end{thm}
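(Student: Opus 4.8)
The plan is to assemble the theorem directly from the two inequalities already in hand and then upgrade the resulting numerical equality to an explicit bijection by a counting argument. First I would observe that the set of roots of $f$ in $K$ is finite --- indeed Corollary~\ref{cor-ub} gives the crude bound $q^{[r/2]+1}$ --- so that $l$ and $|S_N/\approx|$ are both finite cardinals. Combining Lemma~\ref{lem1} (applicable since $N>2r>r$) with Lemma~\ref{lem2} (applicable since $N>2r$) then yields both $l\leq|S_N/\approx|$ and $l\geq|S_N/\approx|$, hence $l=|S_N/\approx|$.

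For the refined statement I would take the map $\Phi: x\mapsto[\overline{x}]$ and check that it is well defined and injective. Well-definedness is exactly the remark made before the definition of $\approx$: reduction modulo $\pi^NA$ is a ring homomorphism, so $f_N(\overline{x})=\overline{f(x)}=\overline{0}$, giving $\overline{x}\in S_N$ and $[\overline{x}]\in S_N/\approx$. For injectivity I would reuse the argument from the proof of Lemma~\ref{lem1}: if $x\neq x'$ are roots of $f$ in $A$, then $v(x-x')\leq r/2\leq r$ by Lemma~\ref{lem-dist}, so $x\not\equiv x'\bmod{\pi^{r+1}}$; since $N>r$ this forces $\overline{x}\not\equiv\overline{x'}\bmod{\overline{\pi}^{r+1}}$, i.e. $\overline{x}\not\approx\overline{x'}$ and $[\overline{x}]\neq[\overline{x'}]$.

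Finally I would conclude that $\Phi$ is an injection between two finite sets of equal cardinality, hence a bijection. If one prefers an explicit inverse rather than invoking finiteness, the construction in the proof of Lemma~\ref{lem2} provides it: given $[\beta]\in S_N/\approx$, pick any lift $\gamma\in A$ with $\overline{\gamma}=\beta$, note $v(f(\gamma))\geq N>2r$, apply Lemma~\ref{lem-res} and Hensel's lemma to obtain a root $\xi\in A$ with $\xi\equiv\gamma\bmod{\pi^{N-r}}$, and since $N-r>r$ deduce $[\overline{\xi}]=[\beta]$; the same Lemma~\ref{lem-dist} estimate shows this $\xi$ is independent of the chosen lift and of the chosen representative of $[\beta]$, so $[\beta]\mapsto\xi$ is a genuine two-sided inverse of $\Phi$. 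The only point requiring care is the bookkeeping of the two thresholds --- $N>r$ is what makes the equivalence relation collapse distinct roots into distinct classes, while $N>2r$ is what makes Hensel's lemma applicable to an arbitrary lift of a root of $f_N$ --- but both are subsumed by the hypothesis $N>2r$, so there is no real obstacle beyond citing the lemmas in the right order.
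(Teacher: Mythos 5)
Your proposal is correct and follows essentially the same route as the paper: the paper states Theorem~\ref{teor1} as an immediate consequence of Lemmas~\ref{lem1} and~\ref{lem2}, with the bijection itself coming from the injectivity argument inside Lemma~\ref{lem1} and the Hensel-lifting inverse inside Lemma~\ref{lem2}, exactly as you describe. (One trivial nitpick: when $r=0$ the chain $N>2r>r$ degenerates, but $N>2r\geq r$ still gives $N>r$, so nothing breaks.)
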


\begin{cor}\label{cor-approx}
Let $g=X^n+b_{n-1}X^{n-1}+\cdots+b_0\in A[X]$ be a polynomial such that $v(a_i-b_i)>2r$. Then $f$ and $g$ have the same
number of roots in $K$.
\end{cor}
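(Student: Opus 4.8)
The plan is to choose a single modulus $N$ that meets the hypothesis of Theorem~\ref{teor1} for both $f$ and $g$ at once, and then to notice that the two root counts it produces are read off from identical data. First I would record that the valuation is integer-valued and that $r=v(\Delta)\ge 0$, since $\Delta=\textrm{Res}_X(f,f')\in A$ for a monic $f$ with coefficients in $A$. Hence the hypothesis $v(a_i-b_i)>2r$ amounts to $a_i\equiv b_i\bmod{\pi^{2r+1}}$ for every $i$. Taking $N=2r+1$, this gives $f_N=g_N$ as polynomials over $A/\pi^NA$, so $f$ and $g$ have exactly the same set $S_N$ of roots in $A/\pi^NA$.

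The heart of the argument is to check that $g$ also satisfies the standing assumptions of this section, with the \emph{same} value of $r$. The point is that $\textrm{Res}_X(f,f')$ is a polynomial with coefficients in $\mathbb{Z}\subseteq A$ in the coefficients $a_0,\dots,a_{n-1}$, and the same polynomial evaluated at $b_0,\dots,b_{n-1}$ is $\textrm{Res}_X(g,g')$. Applying the reduction homomorphism $A\to A/\pi^{2r+1}A$ and using $a_i\equiv b_i$, I would obtain $\textrm{Res}_X(g,g')\equiv\Delta\bmod{\pi^{2r+1}}$, hence $v(\textrm{Res}_X(g,g')-\Delta)>2r\ge r=v(\Delta)$; the strict ultrametric inequality then forces $v(\textrm{Res}_X(g,g'))=r$. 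In particular $g$ has non-vanishing discriminant, so Theorem~\ref{teor1} is available for $g$ with the integer $r$ unchanged.

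To finish, I would observe that the equivalence relation $\approx$ on $S_N$ is defined purely in terms of $N$ and the discriminant valuation, both of which agree for $f$ and for $g$; so $S_N/\approx$ is literally the same set of classes in either case. Applying Theorem~\ref{teor1} with $N=2r+1>2r$ to $f$ gives that the number of roots of $f$ in $K$ is $|S_N/\approx|$, and applying it with the same $N$ to $g$ gives that the number of roots of $g$ in $K$ is the same quantity; the corollary follows.

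The only delicate point is the equality $v(\textrm{Res}_X(g,g'))=r$, and this is exactly where the bound $2r$ in the hypothesis (rather than merely $r$) is needed: one uses that the discriminant is a polynomial with $A$-coefficients in the coefficients of the polynomial, so congruences modulo a power of $\pi$ pass through, after which the strict triangle inequality closes the argument. I do not anticipate any other real obstacle.
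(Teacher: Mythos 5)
Your proposal is correct and follows essentially the same route as the paper: reduce both discriminants modulo $\pi^{2r+1}$ to conclude $v(\mathrm{Res}_X(g,g'))=r$, then apply Theorem~\ref{teor1} to $f$ and $g$ with $N=2r+1$. You simply spell out the details (the resultant as an integer polynomial in the coefficients, the strict ultrametric inequality, and the identity of $S_N/\approx$ for the two polynomials) that the paper leaves implicit.
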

\begin{proof}
Since $a_i\equiv b_i\bmod{\pi^{2r+1}}$, then $${\rm Res}_X(g,g')\equiv{\rm Res}_X(f,f')\equiv\Delta\bmod{\pi^{2r+1}}.$$
Therefore the discriminant of $g$ has also valuation $r$. We conclude by applying Theorem~\ref{teor1} to $f$ and $g$
with $N=2r+1$.
\end{proof}

It is important to note that the proofs of Lemmas~\ref{lem1} and~\ref{lem2} remain valid if we change our
equivalence relation $\approx$ by the (apparently finer) relation $\sim$ defined by $x\sim y$ if and
only if $x\equiv y\bmod{\overline{\pi}^{N-r}}$. Therefore Theorem~\ref{teor1} remains true with this new
equivalence relation. Denote by $[[x]]$ the equivalence class of roots with respect to $\sim$ that contains
$x$. It is clear that $[[x]]\subseteq[x]$ for all $x$. On the other hand, the number of classes with respect to
$\sim$ or $\approx$ must be the same (they coincide with the number of roots of $f$ in $K$), thus $[[x]]=[x]$ for
all roots $x\in A/\pi^NA$ of $f_N$. We derive several corollaries from this remark.

\begin{cor}\label{cor-ub2}
For any $N>2r$, the number of roots of $f_N$ in $A/\pi^NA$ is less than or equal to $q^r$ times the number of
roots of $f$ in $K$.
\end{cor}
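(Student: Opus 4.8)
The plan is to exploit the finer equivalence relation $\sim$ discussed in the remark preceding this corollary, for which Theorem~\ref{teor1} still holds: the number of $\sim$-classes of roots of $f_N$ in $A/\pi^NA$ equals the number of roots of $f$ in $K$. So it suffices to show that every $\sim$-class contains at most $q^r$ elements, and then to sum over the classes: if $\ell$ denotes the number of roots of $f$ in $K$, then
\begin{equation*}
|S_N| \;=\; \sum_{[[x]]\in S_N/\sim} |[[x]]| \;\leq\; q^r\,|S_N/\sim| \;=\; q^r\ell,
\end{equation*}
which is exactly the claim.

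To bound the size of a single $\sim$-class, fix a root $x_0\in S_N$. By definition of $\sim$, the class $[[x_0]]$ is contained in the set of all $y\in A/\pi^NA$ with $y\equiv x_0\bmod{\overline{\pi}^{N-r}}$; since $N>2r$ we have $N-r>0$, so this set is the image in $A/\pi^NA$ of the coset $x_0+\pi^{N-r}A$. The number of such $y$ is the index $[\,\pi^{N-r}A/\pi^NA : 0\,]$, equivalently the cardinality of $A/\pi^rA$. First I would recall that $A$ is a complete discrete valuation ring with residue field of $q$ elements, so each successive quotient $\pi^iA/\pi^{i+1}A$ is a one-dimensional $\kappa$-vector space and hence $|A/\pi^kA| = q^k$; in particular $|A/\pi^rA| = q^r$. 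Therefore $|[[x_0]]| \leq q^r$, and the estimate above follows.

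There is essentially no serious obstacle here — the argument is a counting argument once the correct equivalence relation is in play. The only point that requires a little care is the appeal to the remark: one must note that $\sim$ is genuinely usable (i.e. that Lemmas~\ref{lem1} and~\ref{lem2}, and hence Theorem~\ref{teor1}, hold verbatim with $\sim$ in place of $\approx$, so that $|S_N/\sim|$ really does equal the number of roots of $f$ in $K$ rather than merely bounding it), which is precisely what the remark asserts. Everything else is the elementary fact that a coset of $\pi^{N-r}A$ in $A/\pi^NA$ has $q^r$ elements.
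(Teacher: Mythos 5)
Your proof is correct and follows exactly the paper's argument: the paper's own proof is the one-line observation that each class $[[x]]$ contains at most $q^r$ elements while the number of classes equals the number of roots of $f$ in $K$, relying on the same remark about the finer relation $\sim$. You have merely spelled out the coset-counting detail ($|\pi^{N-r}A/\pi^NA|=q^r$) that the paper leaves implicit.
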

\begin{proof}
Any class $[[x]]$ contains at most $q^r$ elements and the number of classes is the number of roots of $f$ in $K$.
\end{proof}

\begin{cor}
For any $N>2r$, the number of roots of $f_N$ in $A/\pi^NA$ is not greater than $q^{r+[r/2]+1}$.
\end{cor}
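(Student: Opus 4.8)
The plan is to simply chain together the two upper bounds already established. First I would invoke Corollary~\ref{cor-ub2}: since $N>2r$, the number of roots of $f_N$ in $A/\pi^NA$ is at most $q^r$ times the number of roots of $f$ in $K$. This reduces the problem to bounding the number of roots of $f$ in $K$.

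Next I would apply Corollary~\ref{cor-ub}, which gives that the number of roots of $f$ in $K$ is at most $q^{[r/2]+1}$. (Note this corollary has no restriction on $N$, so it applies directly under our hypothesis $N>2r$.) Multiplying the two estimates yields that the number of roots of $f_N$ is at most $q^r\cdot q^{[r/2]+1}=q^{r+[r/2]+1}$, which is exactly the claimed bound.

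There is essentially no obstacle here: the statement is a formal consequence of Corollaries~\ref{cor-ub} and~\ref{cor-ub2}, and the only thing to check is the arithmetic of exponents $r+([r/2]+1)=r+[r/2]+1$. One could add a remark that the bound is typically far from sharp, since it multiplies a worst-case fiber size $q^r$ by a worst-case root count $q^{[r/2]+1}$, and these extremes are unlikely to occur simultaneously; but such a discussion is not needed for the proof itself.
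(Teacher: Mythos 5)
Your proof is correct and is exactly the paper's argument: combine Corollary~\ref{cor-ub2} (at most $q^r$ roots of $f_N$ per root of $f$ in $K$) with Corollary~\ref{cor-ub} (at most $q^{[r/2]+1}$ roots of $f$ in $K$) and multiply. Nothing further is needed.
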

\begin{proof}
Apply Corollaries~\ref{cor-ub} and~\ref{cor-ub2}.
\end{proof}

\begin{cor}
If $r=0$ then the number of roots of $f_N$ in $A/\pi^NA$ coincide with the number of roots of $f$ in $K$ for all $N\geq 1$.
\end{cor}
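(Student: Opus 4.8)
The plan is to deduce this immediately from the refinement of Theorem~\ref{teor1} discussed in the remark following Corollary~\ref{cor-approx}, observing that the hypothesis $r=0$ collapses the relevant equivalence classes to singletons.

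First I would note that when $r=0$ the condition $N>2r$ holds for \emph{every} $N\geq 1$, so that refinement of Theorem~\ref{teor1} is available without any restriction: the number of roots of $f$ in $K$ equals $|S_N/\sim|$, where $x\sim y$ iff $x\equiv y\bmod\overline\pi^{N-r}$. With $r=0$ this congruence reads $x\equiv y\bmod\overline\pi^{N}$, and since $\overline\pi^{N}=\overline{\pi^N}=\overline 0$ in the ring $A/\pi^NA$, it is just $x=y$. Hence every $\sim$-class is a singleton, so $|S_N/\sim|=|S_N|$, i.e.\ the number of roots of $f_N$ in $A/\pi^NA$ equals the number of roots of $f$ in $K$.

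If one prefers an argument that does not invoke the remark, the same conclusion follows by a sandwich. For the upper bound I would apply Corollary~\ref{cor-ub2} with $q^r=q^0=1$, which gives that the number of roots of $f_N$ is at most the number of roots of $f$ in $K$. For the lower bound I would check that the reduction map $x\mapsto\overline x$ carries each root of $f$ in $A$ (all roots lie in $A$ by Lemma~\ref{lem-ria}) to a root of $f_N$, and is injective on these roots: if $\beta_i\neq\beta_j$ are roots of $f$, then $v(\beta_i-\beta_j)\leq r/2=0$ by Lemma~\ref{lem-dist}, so $\beta_i\not\equiv\beta_j\bmod\pi$ and hence $\overline{\beta_i}\neq\overline{\beta_j}$ in $A/\pi^NA$.

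I do not expect a genuine obstacle: all the real content has already been established in Lemmas~\ref{lem-ria},~\ref{lem-dist},~\ref{lem2} and Corollary~\ref{cor-ub2}. The only point that needs a moment's care is confirming that $r=0$ forces $N>2r$ for every $N\geq 1$, so that those results apply across the whole stated range of $N$; everything else is then formal.
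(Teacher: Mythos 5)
Your proposal is correct and matches the paper's proof, which simply cites Corollary~\ref{cor-ub2} for the upper bound and Lemma~\ref{lem1} (valid since $N>r=0$) for the lower bound; your ``sandwich'' argument is exactly this, with the injectivity step of Lemma~\ref{lem1} unwound rather than cited. Your first variant via the $\sim$-relation remark is an equivalent repackaging of the same facts, so there is no substantive difference in approach.
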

\begin{proof}
Apply Corollary~\ref{cor-ub2} and Lemma~\ref{lem1}.
\end{proof}

\section{Roots of Regular Polynomials}\label{sec-reg}

The goal of this section is to give a procedure to count the exact the number of roots in $K^\ast$ of regular polynomials.
This is done in Theorems~\ref{lower-bin2} and \ref{lower-bin3}.
The following corollary is just a special case of Theorem~\ref{teor1}, when $r=0$ and $N=1$, but we are going to use
it in this section, so we would like to state it as a separate result. It should also be pointed out the both Corollary~\ref{cor-approx2}
and Lemma~\ref{lem-bin} are standard results.

\begin{cor}\label{cor-approx2}
If $v(\Delta)=0$ then the number of roots of $f$ in $K^\ast$ is equal to the number of roots of $f_1$ in $\kappa^\ast$ where $f_1$ is the
reduction of $f$ modulo $\pi A$. 
\end{cor}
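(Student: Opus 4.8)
The plan is to derive Corollary~\ref{cor-approx2} as a direct instance of Theorem~\ref{teor1}. Since we are assuming $v(\Delta)=0$, we have $r=v(\Delta)=0$, so the hypothesis $N>2r$ of Theorem~\ref{teor1} is satisfied by $N=1$. Applying the theorem with this choice of $N$, the map $x\mapsto[\overline{x}]$ is a bijection between the set of roots of $f$ in $K$ (equivalently in $A$) and the set $S_1/\approx$, where $S_1\subseteq A/\pi A=\kappa$ is the set of roots of $f_1$, the reduction of $f$ modulo $\pi A$.

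The next step is to observe that when $r=0$ and $N=1$, the equivalence relation $\approx$ is trivial: by its definition, for $N\leq r$ two roots are equivalent iff they are literally equal, and here $N=1>0=r$ puts us in the other case, but $x\equiv y\bmod{\overline{\pi}^{r+1}}$ with $r=0$ just says $x\equiv y\bmod{\overline{\pi}}$, and in $\kappa=A/\pi A$ the element $\overline{\pi}$ is zero, so this congruence holds for all $x,y$ --- wait, that would collapse everything. The correct reading is that $\overline{\pi}^{r+1}=\overline{\pi}^1$, and working \emph{inside} $A/\pi^N A = A/\pi A$, the condition $x\equiv y \bmod \overline{\pi}$ means $x-y$ lies in the ideal generated by $\overline{\pi}=0$, i.e. $x=y$. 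So $[x]=\{x\}$ for every $x\in S_1$, hence $S_1/\approx$ is in canonical bijection with $S_1$ itself, and $|S_1/\approx|=|S_1|$ equals the number of roots of $f_1$ in $\kappa$.

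Finally I would pass from roots in $K$ and $\kappa$ to roots in $K^\ast$ and $\kappa^\ast$. A root $x\in A$ of $f$ reduces to $0\in\kappa$ if and only if $x\in\M$, i.e. $v(x)>0$; but then $v(f(x))=\min\{v(a_i x^i)\}$ would force $v(a_0)=v(f(0))>0$, whereas evaluating $f$ at $0$ shows $a_0$ reduces to the constant term of $f_1$, so $0$ is a root of $f$ in $K$ precisely when $0$ is a root of $f_1$ in $\kappa$. Thus the bijection $x\mapsto\overline{x}$ of Theorem~\ref{teor1} restricts to a bijection between the nonzero roots of $f$ in $K$ and the nonzero roots of $f_1$ in $\kappa$, which gives the claimed equality. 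The only mildly delicate point is the bookkeeping around the equivalence relation degenerating when $r=0$, $N=1$; everything else is immediate from Theorem~\ref{teor1} and the observation that reduction commutes with evaluation at $0$.
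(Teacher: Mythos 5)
Your first two paragraphs follow exactly the route the paper intends: the paper offers no separate proof, saying only that the corollary is ``a special case of Theorem~\ref{teor1} with $r=0$ and $N=1$,'' and your verification that the relation $\approx$ degenerates to equality (so that $|S_1/\approx|=|S_1|$) is the right reading of that remark. Up to that point you have correctly established that the number of roots of $f$ in $K$ equals the number of roots of $f_1$ in $\kappa$.

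The gap is in your final paragraph, where you pass from $K$ and $\kappa$ to $K^\ast$ and $\kappa^\ast$. The claim ``$0$ is a root of $f$ in $K$ precisely when $0$ is a root of $f_1$ in $\kappa$'' is only true in one direction: a root of $f$ in $\M$ forces $v(a_0)>0$, but $v(a_0)>0$ does not force $f(0)=0$. More importantly, even a correct version of that equivalence would not suffice, because the real obstruction is a \emph{nonzero} root of $f$ lying in $\M$: such a root belongs to $K^\ast$ but reduces to $0\notin\kappa^\ast$, so the bijection of Theorem~\ref{teor1} does not restrict to nonzero roots. Concretely, $f=X-\pi$ has $v(\Delta)=0$, one root $\pi\in K^\ast$, and reduction $f_1=X$ with no roots in $\kappa^\ast$; similarly $f=(X-1)(X-\pi)$ has two roots in $K^\ast$ while $f_1=X(X-1)$ has one root in $\kappa^\ast$. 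So the statement as literally written needs an extra hypothesis, and your argument cannot close this gap because there is none to be had in general. The corollary is only ever invoked in the paper (inside the proof of Theorem~\ref{lower-bin2}, for the polynomial $\tilde{h}=X^n+\pi^{-l}a_0$ with $v(\pi^{-l}a_0)=0$) in the situation $v(a_0)=0$, where every root of $f$ in $\overline{K}$ is a unit and $0$ is not a root of $f_1$; under that implicit hypothesis the sets of roots in $K$ and $K^\ast$ (respectively $\kappa$ and $\kappa^\ast$) coincide and your first two paragraphs already finish the proof. You should either add that hypothesis explicitly or replace your last paragraph by the observation that it is needed.
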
 

\begin{lem}\label{lem-bin}
If $f(X)=X^n+a_0$ then the discriminant of $f$ is
$$\Delta(f)=(-1)^{n(n-1)/2}n^na_0^{n-1}.$$
\end{lem}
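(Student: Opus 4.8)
The plan is to compute the resultant $\mathrm{Res}_X(f,f')$ directly from its definition as a product over the roots of $f$. Writing $\zeta_1,\dots,\zeta_n\in\overline{K}$ for the roots of $f(X)=X^n+a_0$, one has the standard identity
\[
\Delta(f)=(-1)^{n(n-1)/2}\frac{1}{a_n}\mathrm{Res}_X(f,f')=(-1)^{n(n-1)/2}\prod_{i=1}^n f'(\zeta_i),
\]
since $f$ is monic. So the first step is to recall (or quickly justify) this formula relating the discriminant to $\prod_i f'(\zeta_i)$.

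Next I would evaluate $f'$ at a root. Since $f'(X)=nX^{n-1}$ and $\zeta_i^n=-a_0$, we get $\zeta_i f'(\zeta_i)=n\zeta_i^n=-na_0$, hence $f'(\zeta_i)=-na_0/\zeta_i$. Therefore
\[
\prod_{i=1}^n f'(\zeta_i)=\frac{(-na_0)^n}{\prod_{i=1}^n\zeta_i}=\frac{(-1)^n n^n a_0^n}{\prod_{i=1}^n\zeta_i}.
\]
The remaining ingredient is $\prod_i\zeta_i$, which by Vieta's formulas for $X^n+a_0$ equals $(-1)^n a_0$ (the constant term is $(-1)^n\prod\zeta_i=a_0$). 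Substituting gives $\prod_i f'(\zeta_i)=\frac{(-1)^n n^n a_0^n}{(-1)^n a_0}=n^n a_0^{n-1}$, and multiplying by the sign $(-1)^{n(n-1)/2}$ yields the claimed formula.

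There is really no serious obstacle here; the only points requiring a little care are the sign bookkeeping and the convention for the discriminant-resultant relationship. The one genuine subtlety is that $\prod_i f'(\zeta_i)$ only makes sense, and the formula $\Delta=0$ only fails to hold, when $f$ has no repeated roots — but that is automatic as long as $n a_0\neq 0$, and in the degenerate cases ($n a_0=0$) both sides of the asserted identity vanish, so the statement holds trivially. An alternative, completely elementary route avoiding roots in $\overline{K}$ is to expand the $(2n-1)\times(2n-1)$ Sylvester matrix of $f$ and $f'=nX^{n-1}$ directly; because $f'$ is a monomial the matrix is sparse enough that its determinant can be read off, but the root-product computation above is shorter and I would present that.
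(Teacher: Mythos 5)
Your proposal is correct and follows essentially the same route as the paper: both express $\Delta(f)$ as $(-1)^{n(n-1)/2}\prod_i f'(\zeta_i)$ and evaluate the product via Vieta's relation $\prod_i\zeta_i=(-1)^na_0$, differing only in trivial algebraic bookkeeping (you rewrite $f'(\zeta_i)=-na_0/\zeta_i$, while the paper computes $n^n(\prod_i\zeta_i)^{n-1}$ directly). No gaps.
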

\begin{proof}
Write $f(X)=X^n+a_0=\prod_{i=1}^{n}(X-\alpha_i)$ with $\alpha_i\in\overline{K}$. Then
\begin{align*}
\Delta(f)&= (-1)^{n(n-1)/2}\textrm{Res}(f,f')\!=\!(-1)^{n(n-1)/2}\prod_{i=1}^nf'(\alpha_i)\!=\!(-1)^{n(n-1)/2}\prod_{i=1}^nn\alpha_i^{n-1}\\
&=(-1)^{n(n-1)/2}n^n\left(\prod_{i=1}^n\alpha_i\right)^{n-1}=(-1)^{n(n-1)/2}n^n(-1)^{n(n-1)}a_0^{n-1}\\
&=(-1)^{n(n-1)/2}n^na_0^{n-1}.
\end{align*}
\end{proof}

\begin{lem}\label{lower-bin1}
If $g(X)=X^n+a_{n-1}X^{n-1}+\dots+a_1X+a_0\in A[X]$ satisfies $v(a_0)=0$, $v(a_i)>0$ for all $1\leq i<n$ and
$p\nmid n$ then the number of roots of $g$ in $K^\ast$ is equal to the number of roots of the lower binomial
$X^n+a_0$ of $g$ in $K^\ast$.
\end{lem}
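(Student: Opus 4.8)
The plan is to apply the reduction-modulo-$\pi$ philosophy of Corollary~\ref{cor-approx2}, but since the discriminant of $g$ need not have valuation $0$, I cannot invoke that corollary directly. Instead I would compare $g$ with its lower binomial $h(X)=X^n+a_0$ by showing both have the same number of roots in $K^\ast$ through a Hensel-lifting argument applied directly. First I would observe that, by Lemma~\ref{lem-bin}, the discriminant of $h$ is $(-1)^{n(n-1)/2}n^na_0^{n-1}$; since $v(a_0)=0$ and $p\nmid n$ (so $v(n)=0$), we get $v(\Delta(h))=0$. Thus $h$ has no repeated roots and, by Corollary~\ref{cor-approx2}, the number of roots of $h$ in $K^\ast$ equals the number of roots of $X^n+\overline{a_0}$ in $\kappa^\ast$.

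Next I would analyze $g$ itself. Any root $\beta\in K^\ast$ of $g$ lies in $A$ by Lemma~\ref{lem-ria}, and reducing mod $\pi$ gives $\overline{\beta}^{\,n}+\overline{a_0}=\overline{0}$ in $\kappa$ (the intermediate terms vanish since $v(a_i)>0$), so $\overline{\beta}$ is a root of $X^n+\overline{a_0}$ in $\kappa^\ast$ (it is nonzero because $\overline{a_0}\neq 0$). This defines a map from roots of $g$ in $K^\ast$ to roots of $X^n+\overline{a_0}$ in $\kappa^\ast$. The heart of the argument is to show this map is a bijection. For injectivity, if $\beta,\beta'$ are distinct roots of $g$ with $\overline{\beta}=\overline{\beta'}$, then $v(\beta-\beta')\geq 1$; I would derive a contradiction by factoring $g$ over $\overline{K}$ and examining the valuation of $g'(\beta)=\prod_{\gamma\neq\beta}(\beta-\gamma)$ against the valuation structure forced by $g$ (alternatively, show directly that $g$ cannot have two roots in the same residue disk using that $X^n+\overline{a_0}$ is separable over $\kappa$, hence $g'$ does not vanish mod $\pi$ at any root of $g$). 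For surjectivity, given a root $\bar c\in\kappa^\ast$ of $X^n+\overline{a_0}$, pick any lift $\gamma\in A$ with $\overline{\gamma}=\bar c$; then $v(g(\gamma))\geq 1>0$ while $v(g'(\gamma))=v(n\gamma^{n-1}+\cdots)=0$ since $p\nmid n$ and $v(\gamma)=0$ (the higher terms have positive valuation). Hence $v(g(\gamma)/g'(\gamma)^2)>0$, and Hensel's lemma produces a root $\xi\in A$ of $g$ with $\xi\equiv\gamma\bmod\pi$, so $\overline{\xi}=\bar c$; and $\xi\in K^\ast$ since $\overline{\xi}\neq\overline{0}$.

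Combining: the number of roots of $g$ in $K^\ast$ equals the number of roots of $X^n+\overline{a_0}$ in $\kappa^\ast$, which in turn equals the number of roots of $h(X)=X^n+a_0$ in $K^\ast$ by Corollary~\ref{cor-approx2} applied to $h$. The main obstacle I anticipate is the injectivity step — controlling the valuation of $g'$ at a root of $g$ to rule out two roots collapsing to the same residue — but this should follow cleanly from the observation that the reduction of $g'$ is the reduction of $nX^{n-1}$, which is nonzero on $\kappa^\ast$ precisely because $p\nmid n$; so $g'(\beta)$ is a unit for every root $\beta\in A$ of $g$, forcing distinct roots to have distinct residues. This same unit-derivative fact is exactly what powers both directions, so once it is established the rest is bookkeeping.
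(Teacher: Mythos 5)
Your argument is correct, but it takes a longer road than the paper does. The paper's proof is two lines: by Lemma~\ref{lem-bin} the discriminant of $f=X^n+a_0$ has valuation $r=0$ (using $p\nmid n$ and $v(a_0)=0$), and $g$ satisfies the hypothesis of Corollary~\ref{cor-approx} with respect to this $f$, since $v(a_i-0)>0=2r$ for $1\leq i<n$ and the constant and leading coefficients agree exactly; Corollary~\ref{cor-approx} then gives equality of the root counts outright. You instead re-derive, by hand, the $r=0$ case of the Hensel bijection of Section~\ref{sec-mod}: reduce mod $\pi$, observe that $g'$ reduces to $nX^{n-1}$ so that $g'(\beta)$ is a unit at every unit $\beta$, and use this both to lift roots of $X^n+\overline{a_0}$ (surjectivity) and to separate residues of distinct roots (injectivity). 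That is sound --- the unit-derivative observation does exactly the work you say it does --- but it duplicates machinery the paper has already packaged. One small correction to your opening premise: the discriminant of $g$ \emph{does} have valuation $0$, because ${\rm Res}_X(g,g')$ is a polynomial in the coefficients of $g$ and is therefore congruent mod $\M$ to ${\rm Res}_X(f,f')$, which is a unit; this is precisely the argument in the proof of Corollary~\ref{cor-approx}. So you could have applied Corollary~\ref{cor-approx2} to $g$ and to $X^n+a_0$ separately and compared the two reductions, which are the same polynomial $X^n+\overline{a_0}$ --- an even shorter route than the one you chose, and essentially equivalent to the paper's.
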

\begin{proof}
By Lemma~\ref{lem-bin}, the discriminant of $X^n+a_0$ has valuation $0$. On the other hand, the polynomial $g$
satisfies the hypothesis of Corollary~\ref{cor-approx} with respect to $f=X^n+a_0$. Then both $g$ and its lower
binomial $f$ have the same number of roots in $K$.
\end{proof}

\begin{dfn}
Let $a\in K^\ast$ be an element with valuation $v(a)=l$. The first non-zero digit of $a$ is $\delta(a)=\overline{a/\pi^l}\in \kappa^\ast$.
\end{dfn}

The following result gives a procedure to count the number of roots of a 
regular polynomial when its Newton polygon has only one lower edge.

\begin{thm}\label{lower-bin2}
Let $f(X)=X^n+a_{n-1}X^{n-1}+\dots+a_1X+a_0\in A[X]$ with $p\nmid n$ and 
$a_0\neq 0$. Write $l=v(a_0)$ and assume that
$v(a_{n-i})>il/n$ for all $i=1,\ldots,n-1$. Then the number $R$ of roots of 
$f$ in $K^\ast$ is equal to the number of roots
of the lower binomial $X^n+a_0$ in $K^\ast$. Moreover, if $n\nmid l$ we have 
$R=0$, and if $n|l$ then 
\begin{equation*}
R = \left\{
 \begin{array}{cl} 
  gcd(n,q-1) & \text{if } -\delta(a_0) \textrm{ is an $n^{th}$ power 
   in }\kappa, \\
   0 & \text{otherwise.}
 \end{array} \right.
\end{equation*}
\end{thm}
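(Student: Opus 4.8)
The plan is to split the statement into two parts: first, reducing $f$ to its lower binomial, and second, counting the roots of the binomial $X^n+a_0$ directly.

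\textbf{Reduction to the binomial.} The hypothesis $v(a_{n-i})>il/n$ for $i=1,\dots,n-1$ is precisely the condition that every interior point $(n-i,v(a_{n-i}))$ lies strictly above the segment joining $(0,l)$ and $(n,0)$; equivalently, the Newton polygon of $f$ has a single lower edge $S$ from $(0,l)$ to $(n,0)$, with slope $-l/n$, and no monomial of $f$ other than $a_0$ and $X^n$ lies on $S$. By Theorem~\ref{newton-poly} all $n$ roots of $f$ in $\overline{K}$ have valuation $l/n$, so in particular $f$ has roots in $K^\ast$ only if $n\mid l$. If $n\nmid l$ we immediately get $R=0$, matching the claim. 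If $n\mid l$, I would first substitute $X=\pi^{l/n}Y$ to reduce to the case $l=0$: writing $f(\pi^{l/n}Y)=\pi^{l}g(Y)$ where $g(Y)=Y^n+\sum_{i=1}^{n-1}(a_{n-i}\pi^{-il/n})Y^{n-i}+a_0\pi^{-l}$, the hypothesis $v(a_{n-i})>il/n$ says exactly that the coefficient of $Y^{n-i}$ in $g$ has positive valuation, while the constant term $a_0\pi^{-l}$ has valuation $0$; also $p\nmid n$. So $g$ satisfies the hypotheses of Lemma~\ref{lower-bin1}, whence the number of roots of $g$ in $K^\ast$ equals the number of roots of $Y^n+a_0\pi^{-l}$ in $K^\ast$. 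Since $Y\mapsto\pi^{l/n}Y$ is a bijection $K^\ast\to K^\ast$ carrying roots of $g$ to roots of $f$ and roots of $Y^n+a_0\pi^{-l}$ to roots of $X^n+a_0$, this gives $R$ equals the number of roots of the binomial $X^n+a_0$ in $K^\ast$, as desired.

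\textbf{Counting roots of the binomial.} It remains to count roots of $X^n+a_0$ in $K^\ast$ when $n\mid l$ (the case $n\nmid l$ already being handled). Again substituting $X=\pi^{l/n}Y$ reduces this to counting roots of $Y^n+u$ in $K^\ast$ where $u=a_0\pi^{-l}$ is a unit with $\delta(a_0)=\bar u\in\kappa^\ast$. By Lemma~\ref{lem-bin} the discriminant of $Y^n+u$ is $(-1)^{n(n-1)/2}n^nu^{n-1}$, which has valuation $0$ since $p\nmid n$ and $u$ is a unit; so Corollary~\ref{cor-approx2} applies and the number of roots of $Y^n+u$ in $K^\ast$ equals the number of roots of $Y^n+\bar u$ in $\kappa^\ast$, i.e.\ the number of solutions to $Y^n=-\bar u$ in the cyclic group $\kappa^\ast$ of order $q-1$. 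A standard fact about cyclic groups: the equation $Y^n=c$ has either $\gcd(n,q-1)$ solutions (if $c$ is an $n$th power, equivalently $c^{(q-1)/\gcd(n,q-1)}=1$) or none. Applying this with $c=-\bar u=-\delta(a_0)$ gives exactly the stated formula: $R=\gcd(n,q-1)$ if $-\delta(a_0)$ is an $n$th power in $\kappa$, and $R=0$ otherwise.

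\textbf{Main obstacle.} The substantive steps all quote earlier results (Theorem~\ref{newton-poly}, Lemma~\ref{lower-bin1}, Corollary~\ref{cor-approx2}, Lemma~\ref{lem-bin}), so the only real care needed is bookkeeping: checking that the two scalings $X=\pi^{l/n}Y$ genuinely convert the given inequalities into the hypotheses of the cited lemmas, and that $p\nmid n$ is used in the right places (both to invoke Lemma~\ref{lower-bin1} and to ensure the binomial's discriminant is a unit). The one genuinely new ingredient is the elementary counting lemma for $n$th powers in the cyclic group $\kappa^\ast$, but this is classical and can be dispatched in a line. I would also note up front that the hypothesis is symmetric between "$n\nmid l \Rightarrow R=0$" and the Newton-polygon valuation statement, so it is cleanest to dispose of $n\nmid l$ first and assume $n\mid l$ throughout the rest.
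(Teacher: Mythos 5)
Your proposal is correct and follows essentially the same route as the paper's proof: the same scaling $X=\pi^{l/n}Y$ to reduce to Lemma~\ref{lower-bin1}, the same use of Lemma~\ref{lem-bin} and Corollary~\ref{cor-approx2} to pass to the residue field, and the same cyclic-group count of $n$th roots. No gaps.
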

\begin{proof}
By Theorem~\ref{newton-poly}, all the roots of both $f$ and $\tilde{f}=X^n+a_0$ have valuation $e=l/n$. It is clear
that if $n\nmid l$, then neither $f$ nor $\tilde{f}$ have a root in $K$, because all the elements in $K$ have
integer valuation. Therefore, we only need to consider the case $n|l$.

Define $h(X)=\pi^{-l}f(\pi^eX)$. It is clear that $f$ and $h$ have the same number of roots in~$K$. Our assumptions
on the coefficients of $f$ guarantee that $h$ is a monic polynomial in $A[X]$. Moreover, if $h=X^n+b_{n-1}X^{n-1}+\cdots+b_0$,
then $v(b_0)=0$ and $v(b_{n-i})>0$ for all $1\leq i<n$. By Lemma~\ref{lower-bin1}, the number of roots of $h$ in~$K$
coincides with the number of roots of its lower binomial $\tilde{h}=X^n+\pi^{-l}a_0$ in $K$. Since
$\tilde{h}(X)=\pi^{-l}\tilde{f}(\pi^{e}X)$, then $\tilde{f}$ and $\tilde{h}$ have the same number of roots in $K$.
We conclude that $f$, $\tilde{f}$, $h$ and $\tilde{h}$ have all the same number $R$ of roots in $K$.

It only remains to prove the formula for $R$. By Lemma~\ref{lem-bin}, the discriminant of $\tilde{h}$ has valuation $0$
(since $p\nmid n$ and $v(b_0)=0$). Therefore, by Corollary~\ref{cor-approx2}, the number of roots $R$ of $\tilde{h}$ in $K$
equals the number of roots in $\kappa$ of the reduction $\tilde{h}_1=X^n+\delta(a_0)$ of $\tilde{h}$ modulo $\M$.
If $-\delta(a_0)$ is not an $n^{th}$ power in $\kappa$, then $\tilde{h}$ has no roots. Otherwise, the number of roots of $\tilde{h}$
in~$\kappa$ coincides with the number of $n^{th}$ roots of the unity in~$\kappa$. Since $\kappa^\ast$ is a cyclic group with $q-1$
elements, $R=\gcd(q-1,n)$ in this case.
\end{proof}

\begin{thm}\label{lower-bin3}
Let $f=a_nX^n+\cdots+a_0\in K[X]$ be a regular polynomial. Then the number of roots of $f$ in $K^\ast$ is equal to
the sum of the number of roots in $K^\ast$ of all its lower binomials.
\end{thm}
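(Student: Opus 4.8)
The plan is to reduce the general case to the single-edge case already handled in Theorem~\ref{lower-bin2} by factoring $f$ according to its Newton polygon. First I would dispose of the trivial adjustments: since we count roots in $K^\ast$, the factor $X^{s_0}$ (if $a_0=0$) contributes nothing, so we may assume $a_0\neq 0$; dividing by $a_n$ makes $f$ monic without changing the root count in $K^\ast$, and this does not affect regularity (the Newton polygon is unchanged up to the vertical shift by $v(a_n)$, which leaves slopes and the sets of lattice points on each edge intact). So assume $f$ is monic with $a_0\neq 0$.

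Next I would invoke Theorem~\ref{newton-poly} to factor $f=\prod_m f_m$, where $f_m\in K[X]$ is monic with all roots of valuation $m$, and the valuations $m$ that occur are exactly the negatives of the slopes of the lower edges of $f$. Since the $f_m$ have roots of pairwise distinct valuations, their root sets in $K^\ast$ are disjoint, so the number of roots of $f$ in $K^\ast$ is $\sum_m (\text{number of roots of } f_m \text{ in } K^\ast)$. By Theorem~\ref{teor-regularity}, each $f_m$ is itself regular, and its Newton polygon has exactly one lower edge. I then want to apply Theorem~\ref{lower-bin2} to each $f_m$: I must check that $f_m=X^{n_j}+b_{n_j-1}X^{n_j-1}+\cdots+b_0$ satisfies $p\nmid n_j$ (this is condition~(2) of regularity for the unique edge of $f_m$, which slopes down from $(n_j,0)$ — wait, one must be careful: $f_m$ has a nonzero constant term exactly when $m\geq 0$, which holds here since $f_m$ monic forces its edge to have nonpositive slope, hence $m\geq 0$) and that $v(b_{n_j-i})>i\,v(b_0)/n_j$ for $0<i<n_j$, which is precisely condition~(1) of regularity for $f_m$ translated into the statement that the only lattice points on the edge are the two endpoints. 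Theorem~\ref{lower-bin2} then says the number of roots of $f_m$ in $K^\ast$ equals the number of roots of its lower binomial $X^{n_j}+b_0$ in $K^\ast$.

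The remaining point — and the one requiring the most care — is to identify the lower binomial of $f_m$ with ``the corresponding lower binomial of $f$'' in the sense of Definition~\ref{def-reg}, so that the sum over the $f_m$ matches the sum over the lower binomials of $f$ claimed in the statement. The lower edge $S$ of $f$ with slope $-m$ has endpoints $(s',v(a_{s'}))$ and $(s,v(a_s))$ with $s-s'=n_j$, and the lower binomial of $f$ attached to $S$ is $a_{s'}X^{s'}+a_sX^s$. I would show that the two binomials $X^{n_j}+b_0$ and $a_{s'}X^{s'}+a_sX^s$ have the same number of roots in $K^\ast$: dividing the latter by $a_sX^{s'}$ gives $X^{n_j}+a_{s'}/a_s$, whose roots in $K^\ast$ biject with those of $a_{s'}X^{s'}+a_sX^s$ (the discarded factor $X^{s'}$ has no nonzero roots), so it suffices to compare $X^{n_j}+b_0$ with $X^{n_j}+a_{s'}/a_s$. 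For this I would feed both through the explicit formula of Theorem~\ref{lower-bin2}, checking that $v(b_0)=v(a_{s'}/a_s)$ (both equal $n_j m$, directly from the valuations of the roots of $f_m$ and from the slope of $S$) and that $-\delta(b_0)$ is an $n_j$-th power in $\kappa$ if and only if $-\delta(a_{s'}/a_s)$ is; the latter equivalence follows because $b_0=(-1)^{n_j}\prod_{i}\alpha_i$ over the roots $\alpha_i$ of $f_m$, and an analysis of the lowest-valuation contribution to $a_{s'}$ and $a_s$ (as in the proof of Theorem~\ref{teor-regularity}, where the dominant term of a coefficient is a signed product of roots) shows $\delta(b_0)$ and $\delta(a_{s'}/a_s)$ differ by a factor that is visibly an $n_j$-th power in $\kappa^\ast$, or more simply by observing that Theorem~\ref{lower-bin2} applied to $f_m$ already equates the root count of $f_m$ with that of $X^{n_j}+b_0$, and applied again to the polynomial $X^{n_j}+a_{s'}/a_s$ (whose hypotheses are trivially met) reduces everything to residue-field data that is an intrinsic invariant of the edge $S$. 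Assembling these identifications, $\#\{\text{roots of } f \text{ in } K^\ast\} = \sum_m \#\{\text{roots of } f_m\} = \sum_S \#\{\text{roots of the lower binomial attached to } S\}$, which is the claim. The main obstacle is this last bookkeeping step — matching the intrinsic binomial produced by the factorization with the combinatorially-defined lower binomial of $f$ and confirming the $n_j$-th power condition transfers — rather than anything analytically deep.
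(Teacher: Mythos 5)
Your proposal is correct and follows essentially the same route as the paper: factor $f$ via Theorem~\ref{newton-poly}, invoke Theorem~\ref{teor-regularity} for regularity of the factors, apply Theorem~\ref{lower-bin2} to each single-edge factor, and identify each factor's binomial with the corresponding lower binomial of $f$ by the dominant-term analysis (equation~(\ref{eq1}) with $k=n_j$) from the proof of Theorem~\ref{teor-regularity}. The paper actually obtains the slightly sharper fact that the first digits $\delta(b_0)$ and $\delta(a_{s'}/a_s)$ coincide exactly, but your weaker ``equal up to an $n_j$-th power'' claim already suffices for the root count.
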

\begin{proof}
By Theorem~\ref{newton-poly}, we can write $f=a_n\prod_{j=0}^{t}f_j$ where
$f_0,\ldots,f_t\in K[X]$ are monic polynomials and all the roots of each $f_j$ 
have the same valuation $m_{j+1}$. Here $t+1$ is the number of lower edges of 
the Newton polygon of $f$ and $-m_1>\cdots>-m_{t+1}$ are the slopes of the 
lower edges. Following the notation of
Theorem~\ref{teor-regularity} we define $n_{j+1}=\deg(f_j)$ and 
$s_j=|\{\alpha\in\overline{K}\;:\;f(\alpha)=0\;\mbox{and}\;
v(\alpha)\leq m_j\}|$.
Setting $s_0=0$ we have $n_j=s_{j+1}-s_j$.
The lower binomials of $f$ are the polynomials 
$g_j=a_{n-s_j}X^{n-s_j}+a_{n-s_{j+1}}X^{n-s_{j+1}}$.
Let $R$ and $R_j$ denote the number of roots in $K^\ast$ of $f$ and $f_j$ 
respectively. It is
clear that $R=R_0+\cdots+R_t$. By Theorem~\ref{teor-regularity} the polynomials 
$f_j$ are regular, and then, by Theorem~\ref{lower-bin2} its number $R_j$ of 
roots in $K^\ast$ depends only on its degree and the first digit of its 
constant term. In order to
conclude we only need to prove that $R_j$ coincides with the number of roots of $g_j$ in $K^\ast$.
The number of roots of the lower binomial $g_j=a_{n-s_j}X^{n-s_{j+1}}(X^{s_{j+1}-s_j}+a_{n-s_{j+1}}/a_{n-s_j})$
in $K^\ast$ coincide with the number of roots of the regular monic polynomial 
$X^{s_{j+1}-s_j}+a_{n-s_{j+1}}/a_{n-s_j}$ in $K^\ast$. The degree of this polynomial is $n_j=\deg(f_j)$
and by the equation~\ref{eq1} (with $k=n_j$) in the proof of Theorem~\ref{teor-regularity}, the first digit of $a_{n-s_{j+1}}/a_{n-s_j}$ is equal to the first digit of the constant term of $f_j$. Therefore $R_j$
is also the number of roots of $g_j$ in $K^\ast$.
\end{proof}

\begin{cor}\label{cor-lb}
Let $f\in K[X]$ be a regular polynomial with $t+1$ terms. Then the number of 
roots of $f$ in $K^\ast$ is at most $t(q-1)$, and all the roots of 
$f$ in $K^\ast$ are simple.
\end{cor}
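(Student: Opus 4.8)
The plan is to deduce Corollary~\ref{cor-lb} directly from Theorem~\ref{lower-bin3} together with the formula in Theorem~\ref{lower-bin2}, by carefully counting how the number of lower edges and the degrees of the lower binomials relate to the number $t+1$ of monomial terms of $f$. First I would invoke Theorem~\ref{lower-bin3} to write the number $R$ of roots of $f$ in $K^\ast$ as $R=\sum_{j=0}^{s}R_j$, where $R_j$ is the number of roots in $K^\ast$ of the $j$-th lower binomial and $s+1$ is the number of lower edges of the Newton polygon of $f$. By regularity, each lower edge carries exactly two of the points $(i,v(a_i))$ coming from actual monomials of $f$, and consecutive lower edges share a vertex; so if the Newton polygon has $s+1$ lower edges there are exactly $s+2$ such ``corner'' monomials among the $t+1$ terms of $f$, giving $s+2\leq t+1$, i.e.\ $s+1\leq t$. (One should note that any monomial of $f$ that is a vertex shared by two lower edges is counted once, and the two extreme vertices of the polygon's lower hull are monomials of $f$ since $a_0 a_n\neq 0$ may be assumed after factoring out a power of $X$; I would spell this bookkeeping out.)

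Next I would bound each $R_j$. By Theorem~\ref{lower-bin2}, applied to the monic regular polynomial associated to the $j$-th lower binomial (as in the proof of Theorem~\ref{lower-bin3}), we have $R_j\in\{0,\gcd(n_j,q-1)\}$ where $n_j$ is the degree of that binomial, hence $R_j\leq \gcd(n_j,q-1)\leq q-1$. Therefore
\begin{equation*}
R=\sum_{j=0}^{s}R_j\leq (s+1)(q-1)\leq t(q-1),
\end{equation*}
which is the desired bound. For the simplicity claim, I would argue that the roots of $f$ in $K^\ast$ are exactly the roots of the various monic polynomials $X^{n_j}+c_j$ lying in $K^\ast$ (one per lower edge), and by Lemma~\ref{lem-bin} the discriminant of $X^{n_j}+c_j$ is $(-1)^{n_j(n_j-1)/2}n_j^{\,n_j}c_j^{\,n_j-1}$, which is non-zero because $p\nmid n_j$ (condition (2) of regularity) and $c_j\neq 0$; so each such binomial has distinct roots in $\overline{K}$, and in particular its roots in $K^\ast$ are simple. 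Since roots attached to distinct lower edges have distinct valuations (Theorem~\ref{newton-poly}), they are automatically distinct, so every root of $f$ in $K^\ast$ is simple.

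The main obstacle I expect is the combinatorial identity $s+1\leq t$: one has to be careful that ``number of terms'' refers to monomials with non-zero coefficient, that the Newton polygon's lower hull may have some monomials of $f$ lying strictly above it (these do not create new edges and only help the inequality), and that the shared-vertex accounting is done correctly so as not to double-count. Everything else is a routine application of the already-established Theorems~\ref{newton-poly}, \ref{lower-bin2}, \ref{lower-bin3} and Lemma~\ref{lem-bin}.
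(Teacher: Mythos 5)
Your count of the roots is correct and is essentially the paper's own argument: the paper notes that the number of lower edges is at most $t$, that each lower binomial contributes at most $q-1$ roots in $K^\ast$ by Theorem~\ref{lower-bin2}, and sums via Theorem~\ref{lower-bin3}. Your vertex-counting justification of $s+1\leq t$ is just a more explicit version of the first step, and your bookkeeping caveats (terms strictly above the lower hull, factoring out a power of $X$ so that the extreme vertices are genuine terms) are the right ones.

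The simplicity argument, however, rests on a false identification. It is not true that ``the roots of $f$ in $K^\ast$ are exactly the roots of the various monic polynomials $X^{n_j}+c_j$'': Theorems~\ref{lower-bin2} and~\ref{lower-bin3} assert equality of the \emph{numbers} of roots, not of the roots themselves. For instance, when $p$ is odd the polynomial $f=X^2+\pi X-1$ is regular with lower binomial $X^2-1$, yet its roots are $\bigl(-\pi\pm\sqrt{\pi^2+4}\bigr)/2$, and $f(\pm1)=\pm\pi\neq0$. Consequently the non-vanishing of the discriminant of the binomials, which you correctly obtain from Lemma~\ref{lem-bin} and $p\nmid n_j$, says nothing directly about multiplicities of roots of $f$. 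To repair this, argue through the factors $f_j$ of Theorem~\ref{newton-poly}: after the rescaling $h(X)=\pi^{-l}f_j(\pi^{e}X)$ used in the proof of Theorem~\ref{lower-bin2}, the coefficients of $h$ are congruent modulo $\M$ to those of its lower binomial $\tilde{h}$, whose discriminant is a unit by Lemma~\ref{lem-bin}; the congruence of resultants in the proof of Corollary~\ref{cor-approx} then gives $v(\Delta(h))=0$, so $h$, hence $f_j$, is separable. Since roots of distinct $f_j$ have distinct valuations, every root of $f$ in $K^\ast$ (indeed in $\overline{K}^\ast$) is simple. Note that the paper's own proof silently omits the simplicity claim altogether, so your instinct to address it is sound --- only the mechanism needs fixing.
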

\begin{proof}
The number of lower binomials (i.e., the number of lower edges of the Newton 
polygon) of $f$ is bounded above by $t$. By Theorem~\ref{lower-bin2}, the 
number of non-zero roots of each lower binomial is at most $q-1$. Using 
Theorem~\ref{lower-bin3} we conclude that $f$ has at most $t(q-1)$ roots in 
$K^\ast$.
\end{proof}

We conclude this section by showing that the bound of Corollary~\ref{cor-lb} 
is sharp. Consider the polynomial
$$f=\sum_{i=0}^t(-1)^i\pi^{i^2(q-1)}X^{i(q-1)}\in K[X].$$
It is then easily verified that the Newton polygon of $f$ has 
exactly $t$ lower edges, and their vertices consists of pairs of the form  
$$\left\{\left(i(q-1),i^2(q-1)\right),\left((i+1)(q-1),(i+1)^2(q-1)\right)
\right\}$$
for all $i\in\{0,\ldots,t-1\}$. The polynomial $f$ is regular: $f$ satisfies 
the first item of definition~\ref{def-reg} because all the coefficients of $f$
correspond to vertices of the lower hull of the Newton polygon, and $f$ 
satisfies the second item since $p={\rm char}(\kappa)$ is coprime to
$(i+1)(q-1)-i(q-1)=q-1$. The lower binomials of $f$ are
$$f_i=(-1)^{i+1}\pi^{(i+1)^2(q-1)}X^{(i+1)(q-1)}
+(-1)^i\pi^{i^2(q-1)}X^{i(q-1)}\in K[X]$$
for all $i\in\{0,\ldots,t-1\}$. The number of roots of $f_i$ in $K^\ast$ 
coincides with the number of roots of $X^{q-1}-\pi^{(2i+1)(q-1)}$, which is
$q-1$ according to Theorem \ref{lower-bin2}. Moreover, by Theorem 
\ref{lower-bin3}, the number of roots of $f$ in $K^\ast$ is the sum of the
number of roots of the lower binomials $f_i$ in $K^\ast$. This proves that $f$ 
has exactly $t(q-1)$ roots in $K^\ast$.

\section{Conclusion}\label{sec-concl}

Our root counting method, given in Theorems \ref{lower-bin2} and 
\ref{lower-bin3}, works only with regular polynomials.
Is it possible to give a similar procedure for general polynomials? We believe 
that the result in Theorem \ref{teor1} could be a first step in that 
direction. We also ask whether the upper bound in Corollary~\ref{cor-lb} can  
be extended to a larger class of polynomials. Finally, we point out that  
an extension of regularity to the multivariate case was initiated in 
\cite{Ibrahim}.

\section*{Acknowledgments}

The authors would like to thank J.\ Maurice Rojas for many fruitful discussions.

\end{document}